\numberwithin{equation}{section}
\theoremstyle{plain}
\newtheorem{theorem}{Theorem}[section]
\newtheorem{corollary}[theorem]{Corollary}
\newtheorem{proposition}[theorem]{Proposition}
\newtheorem{conjecture}[theorem]{Conjecture}
\theoremstyle{definition}
\newtheorem{definition}[theorem]{Definition}
\theoremstyle{remark}
\newtheorem{remark}[theorem]{Remark}
\newtheorem{case[theorem]}{Case}
\date{\today}      
\author{A. Iosevich and A. Mayeli} 
\address{Department of Mathematics, University of Rochester, Rochester, NY}
\email{iosevich@gmail.com}
\address{Department of Mathematics, CUNY Graduate Center, New York, NY}
\email{amayeli@gc.cuny.edu } 
\thanks{A.I. was supported in part by the National Science Foundation under grant no. HDR TRIPODS - 1934962 and by the NSF DMS - 2154232. A.M. was supported in part by AMS-Simons Research Enhancement Grant and the PSC-CUNY research grants.}
\begin{document}

\title[Signal recovery and restriction]{ Uncertainty Principles on Finite Abelian Groups, Restriction Theory, and Applications to sparse signal recovery}

\begin{abstract} 
Let $G$ be a finite abelian group. Let  $f: G \to {\mathbb C}$ be a signal (i.e. function). The classical uncertainty principle asserts   that the product of the size  of the support of $f$ and its Fourier transform $\hat f$, $\text{supp}(f)$ and $\text{supp}(\hat f)$ respectively, must satisfy the condition: 
$$|\text{supp}(f)| \cdot |\text{supp}(\hat f)| \geq |G|.$$ 
In the first part of this paper, we improve the uncertainty principle for signals with Fourier transform supported on generic sets. This improvement is achieved by employing  {\it the restriction theory} and {\it the  Salem set}  mechanism from harmonic analysis. Then we investigate some applications of uncertainty principles that were developed in the first part of this paper, to the problem of unique recovery of finite sparse signals in the absence of some frequencies.
 
Donoho and Stark (\cite{DS89})  showed that a signal of length $N$ can be recovered exactly, even if some of the frequencies are unobserved, provided that the product of the size of the number of non-zero entries of the signal and the number of missing frequencies is not too large, leveraging the classical uncertainty principle for vectors. Our results broaden the scope for a natural class of signals in higher-dimensional spaces. In the case when the signal is binary, we provide a very simple exact recovery mechanism through the DRA algorithm. 

 
\end{abstract}  

\maketitle

\tableofcontents
\addtocontents{toc}{\protect\setcounter{tocdepth}{1}}
\section{Introduction}

The purpose of this paper is to examine some basic questions in the realm of signal recovery from incomplete data in signal processing, from the point of view of Fourier uncertainty principles obtained using the restriction theory for the Fourier transform. The questions are motivated by the seminal paper by Donoho and Stark (\cite{DS89})   where the uncertainty principle was used in a fundamental way to affect the exact recovery of a sequence encoded in terms of its Discrete Fourier Transform  (DFT). 

\vskip.125in 

The main thrust of this work is to investigate how classical restriction theory which has played such an important role in modern harmonic analysis comes into play in exact signal recovery via suitable uncertainty principle estimates. We also develop conditions under which exact signal recovery can be accomplished very simply and efficiently. Finally, we develop a simple procedure that allows us to both discretize a signal and perform an efficient recovery procedure. 

\vskip.125in 

This article is organized as follows. In  Section \ref{donohostarksubsection} we describe the Donoho-Stark approach to exact signal recovery via the classical uncertainty principle. Section \ref{upsection} is dedicated to the exposition of a variety of uncertainty principles, using restriction theory, randomness, and decay properties of the Fourier transform. The interaction between the parameters associated with these quantities is discussed as well. In Section \ref{ersection}, we describe the application of the uncertainty principles in Section \ref{upsection} to exact signal recovery. We also describe how these ideas combine with an elementary approach to exact signal recovery we call DRA (see Definition \ref{dradefinition} below), the direct rounding algorithm. In Section \ref{grsection}, we discuss the exact recovery problem in a general setting, with a particular focus on the celebrated Euclidean restriction conjecture. Finally, the remaining proofs are given in Section \ref{proofsection}. 

\section{Preliminaries}

\subsection{Donoho-Stark, support size, and the uncertainty principle.} \label{donohostarksubsection}
 In order to introduce our viewpoint, we need to establish some notation regarding the discrete Fourier transform on finite abelian groups. 
For clarity, we narrow our attention to the finite groups over cyclic groups, i.e.,  $G={\mathbb Z}_N^d$,  where ${\mathbb Z}_N$ is the cyclic group (mod) $N$. 
For a given signal (i.e. function)  $f: {\mathbb Z}_N^d \to {\mathbb C}$, the  Fourier transform $\hat f: {\mathbb Z}_N^d \to {\mathbb C}$ is  a function defined by 
$$ \hat{f}(m)=N^{-d} \sum_{x \in {\mathbb Z}_N^d} \chi(-x \cdot m) f(x),$$ where $\chi(t)=e^{\frac{2 \pi i t}{N}}, \ t \in {\mathbb Z}_N$. Here $m$ is the element of the dual group $\widehat{{\mathbb Z}_N^d}$, that is identified with ${\mathbb Z}_N^d$ itself, and $x\cdot y$  is the dot product in $\Bbb Z_N^d$.  
 The Fourier inversion formula is given by 
 \begin{align}\label{FIN}f(x)=\sum_{m \in {\mathbb Z}_N^d} \chi(x \cdot m) \widehat{f}(m). 
 \end{align}

 The Plancherel identify is given by 
 \begin{align}\label{PI}
 \sum_{x\in \Bbb Z_N^d} |f(x)|^2 = N^d \sum_{m\in \Bbb Z_N^d} |\hat f(m)|^2
 \end{align}
(For a  description of the fundamentals of Fourier analysis on groups, see e.g. \cite{Babai2002,R62,Terras99}.)

The  {\it classical discrete-time uncertainty principle}  for the cyclic groups $\Bbb Z_N$ is due to Donoho and Stark (\cite{DS89}),  and for any finite abelian group $G$ is due to Smith (\cite{Smith90}). For a sharper uncertainty principle result for $\Bbb Z_N$, $N$ a prime, see Tao's result \cite{TaoUncertainty}. 

The principle for the group $G=\Bbb Z_N^d$  asserts that  
$f: {\mathbb Z}_N^d\to {\mathbb C}$ is a non-zero function with support $\text{supp}(f)$ and $\hat f:  {\mathbb Z}_N^d \to {\mathbb C}$ denotes the Fourier transform with support $\text{supp}(\hat f)$, 
then 
 \begin{equation} \label{UPequation-higher}  | \text{supp}(f)| \cdot | \text{supp}(\hat f)| \ge N^d. \end{equation} 

This bounds the time-bandwidth product  from below. 
This principle can be expressed in $1$-dimensional case ${\mathbb Z}_N$ as follows: 
 Let  $(x_i)_{i=1}^{N-1}$ be a finite vector, and let the corresponding discrete Fourier transform $(\hat x_w)_{w=1}^{N-1}$ obtained through the DFT. If  the original sequence has $N_t$ non-zero entries and the transformed sequence has $N_\xi$ non-zero entries, then 
  \begin{equation} \label{UPequation} N_t \cdot N_\xi \ge N. \end{equation} 

Using this uncertainty principle in one dimension, Donoho and Stark established the following result for the recovery of finite one-dimensional signals in the presence of no noise.  

\begin{theorem}[\cite{DS89}]\label{DS} Let $f: \Bbb Z_N\to \Bbb C$ be a finite signal of length $N$ in $\mathbb Z_N$ with   $N_t$ non-zero entries. Suppose that the set of unobserved frequencies $\{\hat f(m)\}_{m\in \mathbb Z_N}$ is of size $N_w$. Then the signal $f$ can be   `recovered uniquely' from the observed frequencies if 
\begin{align} \label{sufficient-size-for-recovery} N_t \cdot N_w < \frac{N}{2}.
\end{align} \end{theorem}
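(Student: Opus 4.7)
The plan is to prove uniqueness by contradiction, reducing the recovery question to a support-size estimate for the difference of two candidate signals and then invoking the one-dimensional classical uncertainty principle \eqref{UPequation}.

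First, I would interpret ``recovered uniquely'' as: there is at most one signal $f$ with $|\text{supp}(f)|\le N_t$ whose Fourier transform matches the prescribed values on the set of observed frequencies. So suppose toward a contradiction that $f_1,f_2:\mathbb Z_N\to\mathbb C$ are two such signals, each with at most $N_t$ non-zero entries, and with $\hat f_1(m)=\hat f_2(m)$ for every observed frequency $m$. Setting $g=f_1-f_2$, linearity of support gives $|\text{supp}(g)|\le |\text{supp}(f_1)|+|\text{supp}(f_2)|\le 2N_t$, while $\hat g$ vanishes on every observed frequency, so $\text{supp}(\hat g)$ is contained in the set of unobserved frequencies and has size at most $N_w$.

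Next, I would apply the classical uncertainty principle \eqref{UPequation} to $g$. If $g\not\equiv 0$, the principle forces $|\text{supp}(g)|\cdot|\text{supp}(\hat g)|\ge N$; combining with the two bounds just established yields $2N_t\cdot N_w\ge N$, i.e.\ $N_t\cdot N_w\ge N/2$, contradicting the hypothesis $N_t\cdot N_w<N/2$. Hence $g\equiv 0$, so $f_1=f_2$, which is the desired uniqueness.

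The substantive content is entirely carried by \eqref{UPequation}; the only subtlety is the factor-of-two loss that appears when one passes from a single sparse signal to the difference of two sparse signals, and it is this loss that is absorbed into the hypothesis $N_t N_w<N/2$ rather than the cleaner $N_t N_w<N$. In this sense there is no real obstacle beyond the bookkeeping above, and any improvement of the threshold would require strengthening the underlying uncertainty principle (for instance by imposing additional structure, as pursued in the restriction-theoretic refinements developed later in the paper).
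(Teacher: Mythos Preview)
Your proof is correct and follows essentially the same route as the paper's argument in Subsection~\ref{subsubsectionuptoer}: form the difference of two candidate sparse signals, bound the support of the difference by $2N_t$ and the support of its Fourier transform by $N_w$, and then invoke the classical uncertainty principle \eqref{UPequation} to derive a contradiction with \eqref{sufficient-size-for-recovery}.
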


\begin{figure}
\label{arraydftpicture}
\centering
\includegraphics[scale=.4]{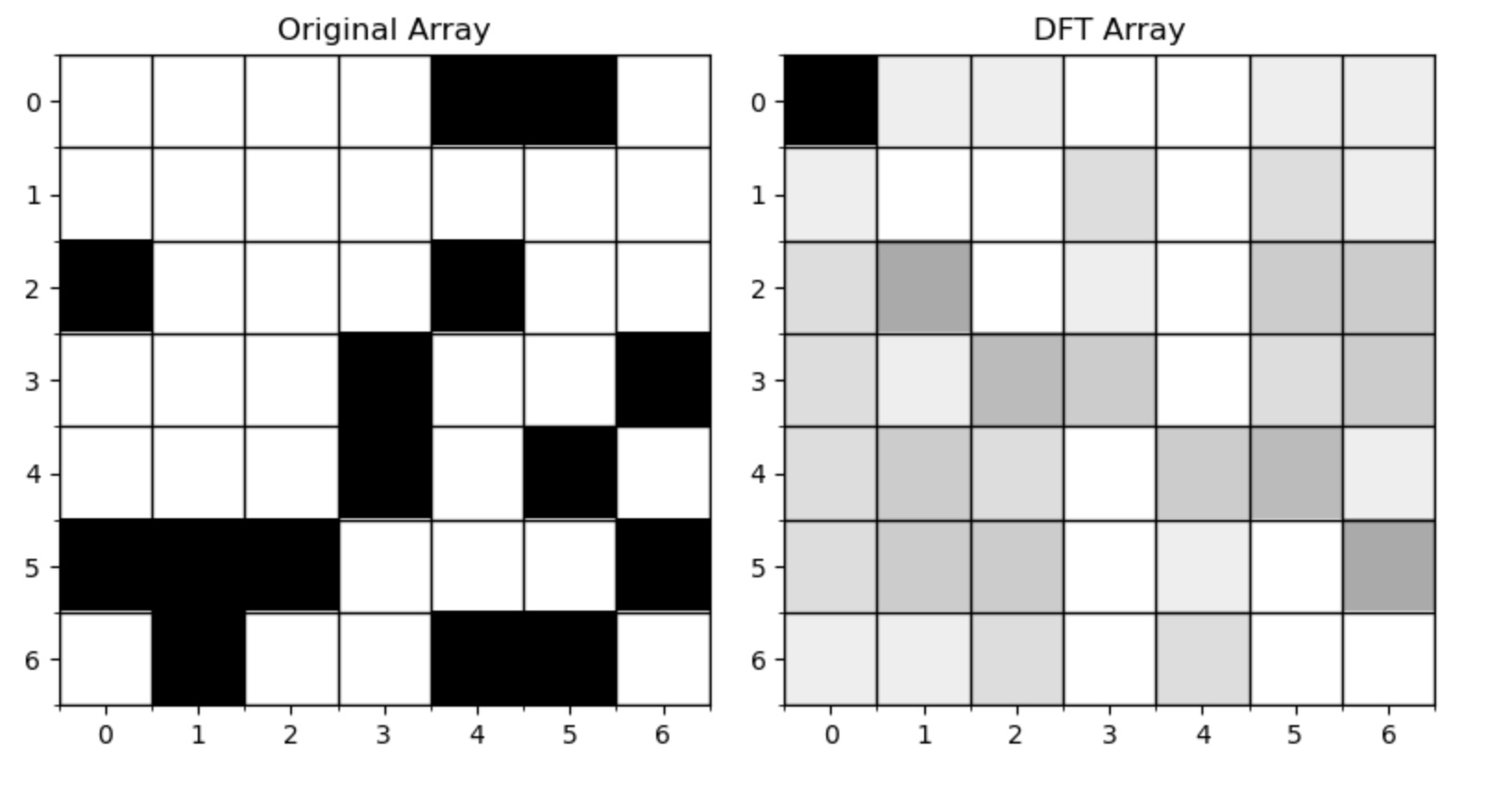}
 \caption{\tiny{(left) A sparse $7\times 7$ array of $1$'s and $0$'s in ${\mathbb Z}_7^2$, and  (right) the array of its discrete Fourier transform on ${\mathbb Z}_7^2$ shaded according to the magnitude of the Fourier coefficients.}}
\end{figure}

The result states that for successful unique recovery,  the signal must exhibit some degree of sparsity,  and a limited number of frequencies can be absent. 

The recovery problem falls within the realm of inverse problems, and it has wide-ranging applications in computer vision, cryptography, data analysis, digital logic circuits, and many other areas of computer science, data science, mathematics, and engineering. See, for example, \cite{AMS99,BGIKS08,CR05,CRT06,RV06, RV08, stallings2006cryptography,schneier2007applied,gonzalez2018digital,ciletti2007digital,janert2010data} and the references contained therein. 

\subsubsection{From uncertainty principle to exact recovery} \label{subsubsectionuptoer} Donoho and Stark used an optimization and $\ell^2$-minimization  technique to recover the spare signal in the presence of missing frequencies, while they used the uncertainty principle to prove the uniqueness of the recovery. The proof of the uniqueness goes as follows:   Let  $f\neq 0$  with support $E=\text{supp}(f)$, and let $S\subset {\mathbb Z}_N$ be the set where the corresponding frequencies $\hat f(m)$, $m\in S$, are absent. 
Assume that $r$ and $g$ are two signals recovering $f$  with  $\hat r(m) = \hat f(m) = \hat g(m)$ for all $m\not\in S$. Then $(\hat r-\hat g)(m)= 0$ for all $m\not\in S$. Define $h=r-g$. Thus, $\text{supp}(\hat h) \subset S$. On the other side, we have  
$\text{supp}(r) = \text{supp}(g) = \text{supp}(f)$.   
%
 %
This implies that  $h$ is supported on a set of size at most $2|E|$, while $\widehat{h}$ is supported on $S$. By the uncertainty principle (\ref{UPequation}), no such $h$ can exist if (\ref{sufficient-size-for-recovery}) holds, and the proof of the uniqueness is complete.  The proof of uniqueness remains identical in higher dimensions. 

Traditional approaches to establishing the uncertainty principle involve advanced techniques like Weyl's inequality or the use of prolate spheroidal wave functions.  (For information on  prolate spheroidal wave functions,  see e.g. \cite{Bell1, ArieAzita23}.) These methods delve into complex concepts such as eigenfunctions of the Fourier transform (as demonstrated by Weyl in \cite{weyl1928}) and eigenfunctions of compact operators, as introduced by Landau and Pollak in \cite{Bell2}. In the discrete setting, however, a much simpler approach can be employed, and this point of view is going to lead us to {\it an improved version}  of the discrete-time uncertainty principle under some natural conditions.  

For the remainder of the paper, we will omit mentioning discrete-time and simply refer to the uncertainty principle, provided it is clear from the context. 

\section{Sharper uncertainty principles in  $\Bbb Z_N^d$} \label{upsection}
\subsection{Via the 
restriction theory}\label{restricionsubsection}

The goal of this section is to prove that the uncertainty principle can 
further {\it refined} using restriction theory. This result is stated in Theorem \ref{mainUPwithRT}.  For the sake of self-containment, we shall 
illustrate how the uncertainty principle in finite settings can be derived through the inverse Fourier transform.

{\it Proof of the uncertainty principle for finite abelian groups:} We will illustrate the proof for $G=\Bbb Z_N^d$ since any finite abelian group can be expressed as a finite product of cyclic groups, and the proof remains the same. 

Suppose that $f$ is supported in a set $E$, and $\widehat{f}$ is supported in a set $\Sigma$. Then by the Fourier Inversion Formula \eqref{FIN}, we have  
$$ f(x)=
\sum_{m \in S} \chi(x \cdot m) \widehat{f}(m).$$

By applying   the Cauchy-Schwarz inequality, we can derive the following: For any $z\in \Bbb Z_N^d$   
\begin{align}\notag  {|f(z)|}^2 \leq |\Sigma| \cdot \sum_{m \in \Sigma} {|\widehat{f}(m)|}^2 
  & = |\Sigma| \cdot \sum_{m \in {\mathbb Z}_N^d} {|\widehat{f}(m)|}^2 \\\label{trivialestimateequation}  
 &= |\Sigma| \cdot N^{-d} \cdot \sum_{x \in {\mathbb Z}_N^d} {|f(x)|}^2  \\\label{eq:x}
&= |\Sigma| \cdot N^{-d} \cdot \sum_{x \in E} {|f(x)|}^2, 
\end{align} 
 
where  in \eqref{trivialestimateequation} we used the  Plancherel identity \eqref{PI}, and in \eqref{eq:x} we used the fact that $f$ is supported in $E$. 
Summing both sides over $z\in E$ and dividing both sides by $\sum_{x \in E} {|f(x)|}^2$, we see that 
\begin{equation} \label{UPdequation} |E| \cdot  |\Sigma| \ge N^d, \end{equation} recovering (\ref{UPequation}) in the case $d=1$, and \eqref{UPequation-higher} in higher dimension $d$. 

\begin{remark} \label{manbitesdogremark} It is important to note that the bound (\ref{UPdequation}) is essentially sharp. For example, suppose that $N$ is an integer,  and $E$ is a $k$-dimensional subspace of ${\mathbb Z}_N^d$. Then by a direct calculation, $\widehat{E}(m)=N^{-(d-k)} E^{\perp}(m)$, where $E^{\perp}$ is the orthogonal subspace to the space $E$, i.e., 
$e^{\frac{2\pi i t\cdot n}{N}}=1$ for all $t\in E$ and $n\in E^\perp$.  When $m=\vec{0}$,    this implies that    $|E| \cdot |E^{\perp}|=N^d$. This indicates that if the frequencies in $E^{\perp}$ are missing or unobserved, it hinders the recovery of the original information.  
However, such examples are very rare. In fact, one can show that these are the only examples where the equality in (\ref{UPdequation}) holds. 

\end{remark} 

\vskip.125in 

\begin{remark} It is interesting to note that if $N$ is prime, the problem takes on a variety of interesting additional features. For example, if $d=1$, the classical uncertainty principle can be replaced by a stronger version proved by Tao (\cite{TaoUncertainty}). If $N=2$, it is known (\cite{IMP17}, \cite{HIPRV18}) that if $E \subset {\mathbb Z}_N^2$ and $\widehat{E}(m)=0$ for some $m \in {\mathbb Z}_N^2$, then $E$ has the same number of points on all lines perpendicular to $m$. This suggests a potentially interesting link between the exact recovery questions and tiling problems in vector spaces over finite fields. See, for example, \cite{FKS22}, \cite{FMV19}, and the references contained therein. 

\end{remark} 

\vskip.125in 

The key point we are going to exploit is that if $\Sigma$, the support of $\hat f$,  is a typical set, then instead of using the support-driven identity   
\begin{equation} \label{trivialestimateequationpure} \sum_{m \in \Sigma} {|\widehat{f}(m)|}^2 = \sum_{m \in {\mathbb Z}_N^d} {|\widehat{f}(m)|}^2 \end{equation} used in \eqref{trivialestimateequation} above in the derivation of (\ref{UPdequation}), followed by estimating the $L^2$ norm of $f$ over its support, we can bound the left-hand side of (\ref{trivialestimateequationpure}) by a suitably scaled $L^p$-norm of $f$ for some $p<2$, resulting in a generally better uncertainty principle. This may seem counter-intuitive since (\ref{trivialestimateequationpure}) is an identity owing to the support assumption on $\widehat{f}$. The gain comes from comparing $L^p$ norms, which leads to a lesser strain on the support of the signal $f$. In order to execute this idea, we bring in the following notion from classical restriction theory. 

\begin{definition} \label{restrictiondef} 
Let $S \subset {\mathbb Z}_N^d$. We say that a $(p,q)$-restriction estimation ($1 \leq p \leq q\leq \infty$) holds for $S$ if there exists a uniform constant  $C_{p,q}$ (independent of $N$ and $S$) such that  for any function $f:\Bbb Z_N^d\to \Bbb C$
\begin{equation} \label{restrictionequation} {\left( \frac{1}{|S|} \sum_{m \in S} {|\widehat{f}(m)|}^q \right)}^{\frac{1}{q}} \leq C_{p,q} N^{-d} {\left( \sum_{x \in {\mathbb Z}_N^d} {|f(x)|}^p \right)}^{\frac{1}{p}}. \end{equation} 
\end{definition} 
When $q=\infty$,  the left-hand side is replaced by the supremum norm, i.e.,  $\|\hat f \|_{\infty}=\max\{|\hat f(m)|\}_{m\in S}$.  

\vskip.125in 

The definition indicates that when $N$ is large, the frequency concentration on the set $\Sigma$ is relatively low. This characteristic proves to be quite advantageous when it comes to signal recovery, especially in scenarios where frequencies outside of $S$ are missing. See Corollary \ref{ERwithRT}  below. The case $q=\infty$ immediately points to the relationship between the decay properties of the Fourier transform of $S$ and the restriction phenomenon. We shall explore this phenomenon in more detail in Subsection \ref{arbitrarysizesubsection}. 

A tremendous amount of work has been done on the restriction phenomenon in vector spaces over finite fields and modules over rings. See for example, \cite{HW18,IK10,IK10b,IKL17,MT04} and the references contained therein. These results mostly deal with restriction to spheres, paraboloids, and other algebraic surfaces in finite settings. The Euclidean restriction theory is discussed briefly in Section \ref{grsection} below. While these situations are interesting in the context of signal recovery, the most interesting case is where the restriction surface is random, and we are going to develop this theory later in this paper.  

\vskip.125in 

Our first result is the following. 

\begin{theorem}[Uncertainty Principle via Restriction Estimation]\label{mainUPwithRT}  Suppose that $f: {\mathbb Z}_N^d\to \Bbb C$ is supported in $E \subset {\mathbb Z}_N^d$, and $\hat{f}:\Bbb Z_N^d\to \Bbb C$ is supported in $\Sigma \subset {\mathbb Z}_N^d$. Suppose that the restriction estimation  (\ref{restrictionequation}) holds for $\Sigma$ for a pair $(p,q)$, $1\leq p\leq q$.  Then 
\begin{equation} \label{UPwithRTequation} {|E|}^{\frac{1}{p}} \cdot |\Sigma| \ge \frac{N^d}{C_{p,q}}. \end{equation} 
\end{theorem}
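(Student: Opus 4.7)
The plan is to run the same pointwise Fourier-inversion argument that gave the classical bound \eqref{UPdequation}, but replace the Cauchy--Schwarz step on the frequency side with a H\"older step tuned to feed into the restriction hypothesis \eqref{restrictionequation}. This will effectively replace Plancherel (the $L^2$ identity that collapses $\sum_{m\in\Sigma}|\hat f(m)|^2$ into $N^{-d}\|f\|_2^2$) by a weaker-exponent inequality that is nevertheless cheaper on the spatial side, and this is exactly where the gain in Theorem \ref{mainUPwithRT} comes from.

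First, since $\hat f$ is supported in $\Sigma$, Fourier inversion \eqref{FIN} gives
\[
f(x)=\sum_{m\in\Sigma}\chi(x\cdot m)\,\hat f(m)
\]
for every $x\in\mathbb{Z}_N^d$. Taking absolute values and applying H\"older with conjugate exponents $q$ and $q'=q/(q-1)$ against the constant function $1$ on $\Sigma$ yields
\[
|f(x)|\;\le\;|\Sigma|^{1/q'}\Bigl(\sum_{m\in\Sigma}|\hat f(m)|^{q}\Bigr)^{1/q}
\]
(with the standard interpretation at $q=\infty$, namely $1/q'=1$ and the bracketed $\ell^q$-sum replaced by $\|\hat f\|_\infty$). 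Taking the supremum over $x$ bounds $\|f\|_\infty$ by the same right-hand side.

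Next, write the bracketed expression as $|\Sigma|^{1/q}\bigl(|\Sigma|^{-1}\sum_{m\in\Sigma}|\hat f(m)|^{q}\bigr)^{1/q}$ and invoke the $(p,q)$-restriction estimate \eqref{restrictionequation} for $\Sigma$; combining with $1/q+1/q'=1$ one obtains
\[
\|f\|_\infty\;\le\;C_{p,q}\,|\Sigma|\,N^{-d}\,\|f\|_p.
\]
Now use the support hypothesis $\operatorname{supp}(f)\subset E$ in the trivial form $\|f\|_p\le |E|^{1/p}\,\|f\|_\infty$. Since $f\not\equiv 0$, pick $x_0$ with $|f(x_0)|=\|f\|_\infty>0$ and cancel $\|f\|_\infty$ from both sides of
\[
\|f\|_\infty\;\le\;C_{p,q}\,|\Sigma|\,N^{-d}\,|E|^{1/p}\,\|f\|_\infty,
\]
which rearranges to the claimed inequality $|E|^{1/p}\cdot|\Sigma|\ge N^d/C_{p,q}$.

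There is not really a hard step here; the whole content of the argument is the choice of H\"older exponent at the frequency side. The one thing to double-check is that the result is consistent with what we already know: at $p=1$ (and any $q$) the trivial bound $\|\hat f\|_\infty\le N^{-d}\|f\|_1$ gives $C_{1,q}=1$, and the theorem collapses back to the classical bound $|E|\cdot|\Sigma|\ge N^d$ of \eqref{UPdequation}, so any nontrivial improvement must come from a set $\Sigma$ supporting a genuinely stronger restriction estimate with $p>1$ and $C_{p,q}$ harmless in $N$ and $|\Sigma|$ --- precisely the class of ``generic'' Salem-type sets the paper will construct next.
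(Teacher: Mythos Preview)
Your proof is correct and follows essentially the same approach as the paper: Fourier inversion restricted to $\Sigma$, H\"older on the frequency side to produce the normalized $\ell^q$-average, then the restriction hypothesis to land on $\|f\|_p$. The only cosmetic difference is in how you close the loop: the paper raises the pointwise bound to the $p$th power, sums over $y\in E$, and divides by $\sum_{x\in E}|f(x)|^p$, whereas you pass through $\|f\|_\infty$ and use $\|f\|_p\le |E|^{1/p}\|f\|_\infty$ before cancelling---these two endings are equivalent and yield the same inequality.
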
 

\begin{remark} It is interesting to note that  for the pair $(p,q)=(1,2)$, the  restriction estimation holds for any set $S$ with constant $C_{1,2}=1$, recovering the classical uncertainty principle (\ref{UPdequation}). \end{remark} 

\vskip.125in

This raises the question under which conditions a non-trivial restriction estimation can hold for a given $\Sigma \subset {\mathbb Z}_N^d$. A sample result is the following. 

\begin{theorem} \label{restrictionlambda4} Let $\Sigma \subset {\mathbb Z}_N^d$ with the property that 
\begin{equation} \label{size} |\Sigma|= \Lambda_{\text{size}} N^{\frac{d}{2}}, \end{equation} 
and 
\begin{equation} \label{lambda4condition} |\{(x,y,x',y') \in U^4:\  x+y=x'+y' \}| \leq \Lambda_{\text{energy}} \cdot {|U|}^2 \end{equation} for every $U \subset \Sigma$.

\vskip.125in 

Then the restriction estimation holds for $\Sigma$ for $(p,q)$, where $p=4/3$ and $q=2$.   Indeed,  for any $f: {\mathbb Z}_N^d \to {\mathbb C}$,
\begin{equation} \label{restrictionlambda4estimate} {\left( \frac{1}{|\Sigma|} \sum_{m \in \Sigma} {|\widehat{f}(m)|}^2 \right)}^{\frac{1}{2}} \leq \Lambda_{\text{size}}^{-\frac{1}{2}} \cdot \Lambda_{\text{energy}}^{\frac{1}{4}} \cdot N^{-d} {\left( \sum_{x \in {\mathbb Z}_N^d} {|f(x)|}^{\frac{4}{3}} \right)}^{\frac{3}{4}}. \end{equation} 

\end{theorem}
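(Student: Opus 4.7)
The plan is to reduce the restriction estimate to a $\Lambda(4)$-type ``extension'' estimate on $\Sigma$ via a $TT^*$-style duality, and then to derive that extension estimate from the additive energy hypothesis.

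First I would observe via Parseval that, writing $P_\Sigma f(x) := \sum_{m \in \Sigma} \widehat{f}(m) \chi(x \cdot m)$ for the self-adjoint frequency projection onto $\Sigma$,
\[
\sum_{m \in \Sigma} |\widehat{f}(m)|^2 \;=\; N^{-d} \|P_\Sigma f\|_{L^2}^2.
\]
Since $P_\Sigma$ is a Hermitian projection, $\|P_\Sigma f\|_{L^2}^2 = \langle P_\Sigma f, f\rangle$, and H\"older with the conjugate pair $(4, 4/3)$ gives
\[
\|P_\Sigma f\|_{L^2}^2 \;\leq\; \|P_\Sigma f\|_{L^4}\,\|f\|_{L^{4/3}}.
\]
The next step is to bound $\|P_\Sigma f\|_{L^4}$. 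Setting $g(m) := \widehat{f}(m)\,1_\Sigma(m)$, the identity $\widehat{u\cdot v}=\hat u\ast \hat v$ together with Plancherel gives $\|P_\Sigma f\|_{L^4}^4 = \| |P_\Sigma f|^2\|_{L^2}^2 = N^d\,\|g\ast g\|_{\ell^2}^2$. The heart of the proof is then the $\Lambda(4)$-type estimate
\[
\|g\ast g\|_{\ell^2}^2 \;\leq\; \Lambda_{\text{energy}} \,\|g\|_{\ell^2}^4 \qquad (\star)
\]
for any $g$ supported on $\Sigma$; granting $(\star)$, one gets $\|P_\Sigma f\|_{L^4} \leq \Lambda_{\text{energy}}^{1/4} N^{d/4} \bigl(\sum_{m\in\Sigma}|\widehat f(m)|^2\bigr)^{1/2}$.

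Chaining these inequalities, and writing $X := \sum_{m\in\Sigma}|\widehat f(m)|^2$, yields
\[
X \;=\; N^{-d}\|P_\Sigma f\|_{L^2}^2 \;\leq\; N^{-d}\|f\|_{L^{4/3}}\cdot \Lambda_{\text{energy}}^{1/4} N^{d/4} X^{1/2},
\]
so $X^{1/2} \leq N^{-3d/4}\Lambda_{\text{energy}}^{1/4}\|f\|_{L^{4/3}}$. Dividing by $|\Sigma|^{1/2} = \Lambda_{\text{size}}^{1/2} N^{d/4}$ and taking square roots produces exactly the claimed constant $\Lambda_{\text{size}}^{-1/2}\Lambda_{\text{energy}}^{1/4} N^{-d}$.

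The main obstacle is the $\Lambda(4)$ estimate $(\star)$, and this is where the subset-energy hypothesis \eqref{lambda4condition} enters essentially. For indicator functions $g=1_U$ with $U\subset\Sigma$, one has $\|1_U\ast 1_U\|_{\ell^2}^2 = E_2(U) \leq \Lambda_{\text{energy}}|U|^2 = \Lambda_{\text{energy}}\|1_U\|_{\ell^2}^4$, which is $(\star)$ on the nose. To pass to arbitrary $g$ I would expand the left-hand side as the $4$-linear sum $\sum_{m+m'=n+n',\,m,m',n,n'\in\Sigma} g(m)g(m')\overline{g(n)g(n')}$, parameterize the constraint by the shift $t=m-n$ so that the sum factors as $\sum_t |F(t)|^2$ with $F(t)=\sum_{n\in\Sigma\cap(\Sigma-t)} g(n+t)\overline{g(n)}$, and then apply Cauchy--Schwarz in $t$ against the representation function $r_\Sigma^{-}(t)=|\Sigma\cap(\Sigma-t)|$, whose $\ell^2$-norm squared is exactly $E_2(\Sigma)$ and therefore controlled by $\Lambda_{\text{energy}}|\Sigma|^2$; alternatively, decomposing $g$ into dyadic level sets $U_k=\{m:|g(m)|\sim 2^{-k}\}$ and invoking the indicator case $(\star)$ on each pair $(U_k,U_\ell)$ (where the bilinear energy $E(U_k,U_\ell)\leq \Lambda_{\text{energy}}|U_k||U_\ell|$ follows from Cauchy--Schwarz in the representation-function variable) reduces $(\star)$ to the indicator case. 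This combinatorial step is where subset universality of the hypothesis is used crucially, and it is the only step that is not a direct application of Plancherel, duality, or H\"older.
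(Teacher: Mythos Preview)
Your argument is essentially the same as the paper's: both express $\sum_{m\in\Sigma}|\widehat f(m)|^2$ as the pairing $\langle P_\Sigma f,f\rangle$ (the paper writes this as $\sum_x f(x)\,\widehat{g\Sigma}(x)$, which is the same object up to a factor $N^{-d}$ and complex conjugation), apply H\"older with exponents $(4,4/3)$, reduce to the $L^4$ extension/$\Lambda(4)$ bound $\|g*g\|_{\ell^2}^2\le \Lambda_{\text{energy}}\|g\|_{\ell^2}^4$ for $g$ supported on $\Sigma$, and then divide by $|\Sigma|^{1/2}=\Lambda_{\text{size}}^{1/2}N^{d/4}$ to get the stated constant.

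One small caution on your justification of $(\star)$: the first route you sketch (factor as $\sum_t|F(t)|^2$ and ``Cauchy--Schwarz in $t$ against $r_\Sigma^-(t)$'') does not close as written, since what enters after Cauchy--Schwarz on $F(t)$ is $\sum_t r_\Sigma^-(t)\,G(t)$ with $G(t)=\sum_n|g(n+t)|^2|g(n)|^2$, and neither $\|r_\Sigma^-\|_{\ell^2}^2=E_2(\Sigma)$ nor $\|r_\Sigma^-\|_{\ell^1}=|\Sigma|^2$ produces $\Lambda_{\text{energy}}\|g\|_{\ell^2}^4$. Your dyadic level-set alternative is the right path and is exactly what the paper invokes (citing \cite{KP22}): reduce to $g\ge 0$ via $|g*g|\le|g|*|g|$, bound the bilinear energy by $E(U_k,U_\ell)=\sum_t d_{U_k}(t)d_{U_\ell}(t)\le E(U_k)^{1/2}E(U_\ell)^{1/2}\le\Lambda_{\text{energy}}|U_k||U_\ell|$, and sum over level sets.
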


\vskip.125in 

\begin{remark} It is interesting to note that the assumption (\ref{lambda4condition}) holds in a variety of natural situations. For example, if $d=2$ $N$ is an odd prime, and 
$$\Sigma=\{x \in {\mathbb Z}_N^2: x_1^2+x_2^2=1\},$$ the unit circle, then (\ref{lambda4condition}) is satisfied with $\Lambda_{energy}=3$ and $\Lambda_{size}$ essentially equal to $1$. The resulting restriction theorem was first established by the first listed author and Doowon Koh in \cite{IK08}. See also \cite{HW18} for a variety of restriction theorems over ${\mathbb Z}_N^d$. 
\end{remark} 

\begin{remark} \label{randomenergytheorem} Let $\Sigma\subset {\mathbb Z}_N^d$ of size $|\Sigma|=\Lambda_{\text{size}} N^{\frac{d}{2}}> N^{\frac{d}{2}}$. Suppose that $\Sigma$ is chosen randomly with respect to the uniform distribution. Then for every $U \subset \Sigma$, the expected value of 
$$ |\{(x,y,x',y') \in U^4: x+y=x'+y'\}|$$ is bounded by 
\begin{equation} \label{energysizerandomequation} (5+c^2){|U|}^2. \end{equation}  

The proof, which uses Chernoff's classical bound shows considerable concentration around the mean and shows that with very high probability, the desired energy inequality holds. The result follows easily from the calculations in \cite{DSSS13}. 
\end{remark} 

\vskip.125in 

\begin{remark} We note that Theorem \ref{restrictionlambda4} is just one example of the relationship between additive energy and restriction. It is not difficult to show that if 
$$ |\{(x^1, \dots, x^k, y^1, \dots, y^k) \in U^{2k}: x^1+x^2+\dots+x^k=y^1+y^2+\dots+y^k\}| \leq \Lambda_{\text{energy}} {|U|}^k$$ for every $U \subset S$, then we obtain the restriction estimate with the exponents $\left(\frac{2k}{2k-1},2 \right)$ with the uniform constant suitably dependent on $\Lambda_{\text{energy}}$ and $\Lambda_{\text{size}}$. More work is required to obtain an appropriate variant of (\ref{energysizerandomequation}). This investigation will be conducted in the sequel. 
\end{remark}  

\vskip.125in 

\begin{remark} Throughout this paper we are going to stick to the pure support conditions, namely, the signal is supported in a set $E$, and its Fourier transform is supported in a set $S$. In practice, many of the arguments go through, up to a constant, if we assume that $f$ is concentrated in $E$ in a suitable sense. For example, we could assume that 
$$ {\left( \sum_{x \in {\mathbb Z}_N^d} {|f(x)|}^p \right)}^{\frac{1}{p}} \leq C_{p,E} {\left( \sum_{x \in E} {|f(x)|}^p \right)}^{\frac{1}{p}},$$ which would allow all of our results to go through at the cost of the constant $C_{p,E}$. This and related notions will be systematically explored in the sequel. 
\end{remark} 

\vskip.125in 

\begin{remark} The $(\frac{4}{3},2)$ restriction theorem in Theorem \ref{restrictionlambda4} extends, by interpolation, to a $(p,2)$ restriction theorem for any $1 \leq p \leq \frac{4}{3}$, since the $(1,2)$ restriction theorem always holds as we noted above. \end{remark} 

\vskip.125in 

\subsection{Salem sets and Salem uncertainty principle}
\label{arbitrarysizesubsection}
We are now going to explore uncertainty principles based on the assumption that the underlying sets are Salem sets, named after Raphael Salem, the mathematician who first discovered them and studied their properties (see e.g. \cite{S50}). In the finite setting, the definition requires a bit of care. 




\begin{definition}[Salem sets]\label{SalemSet} A set $S\subset \Bbb Z_N^d$ is a Salem set at level ${\Lambda}_{Salem}$ if  \begin{equation} \label{fouriersizeencoderequation} |\widehat{S}(z)| \leq \Lambda_{\text{Salem}} \cdot N^{-d} \cdot {|S|}^{\frac{1}{2}} \quad \forall \ z\neq 0. \end{equation} 
\end{definition}  
Notice that every set is a Salem set with the constant $\Lambda_{\text{Salem}}= {|S|}^{\frac{1}{2}}$. This follows from the following simple argument: 
By the definition of the Fourier transform, the inequality 
$$ |\widehat{S}(z)| \leq N^{-d} |S|$$ always holds. Therefore,  we can write 
$$ |\widehat{S}(z)| \leq {|S|}^{\frac{1}{2}} \cdot N^{-d} {|S|}^{\frac{1}{2}}.$$ 

In general, however, the estimate on $\Lambda_{\text{Salem}}$ is much better, as we will show later in  Proposition \ref{randomisgood}. Indeed, it shows that with probability $1-N^{-d \epsilon}$, a randomly chosen set $S$ satisfies the bound $|\widehat{S}(z)| \leq \Lambda_{\text{Salem}} N^{-d} {|S|}^{\frac{1}{2}}$, for $z \not=(0, \dots, 0)$, with $\Lambda_{\text{Salem}} \leq \sqrt{(1+\epsilon)d\log(n)}$.

\vskip.1in 



\vskip.125in 

\begin{theorem} \label{theoremsalem} (Salem Uncertainty Principle) Let $E \subset {\mathbb Z}_N^d$.
Suppose   that $\Sigma$ is Salem at level $\Lambda_{\text{Salem}}$.  
Then for any function $f:\Bbb Z_N^d\to \Bbb C$ with support in $E$ and the Fourier transform $\hat f$ with support in $\Sigma$, we have 
\begin{equation} \label{salemmainestimate} |E| \cdot {|\Sigma|}^{\frac{3}{4}} \ge N^d \cdot \sqrt{\frac{1-dens(\Sigma)}{\Lambda_{\text{Salem}}}},
\end{equation} where $dens(\Sigma)=N^{-d}|\Sigma|$ is  the {\it density} of the set $\Sigma$. 
\end{theorem}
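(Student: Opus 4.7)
The approach is to convert the spectral-support hypothesis into a convolution identity on the spatial side, and then estimate the convolution kernel in $L^r$ using the Salem decay. Concretely, since $\widehat f$ is supported in $\Sigma$, Fourier inversion yields $N^{-d}(f * \Sigma^\vee) = f$, where $\Sigma^\vee(x) = \sum_{m\in\Sigma}\chi(x\cdot m)$. Splitting off the spike at the origin, $\Sigma^\vee = |\Sigma|\delta_0 + K$ with $K(x) = N^d\widehat\Sigma(-x)\,\mathbf{1}_{x\neq 0}$, this turns into
\[
f*K \;=\; N^d\bigl(1-\mathrm{dens}(\Sigma)\bigr)\,f.
\]
Two properties of $K$ are essential: the Salem hypothesis yields the pointwise bound $\|K\|_\infty \le \Lambda_{\mathrm{Salem}}\,|\Sigma|^{1/2}$, while Plancherel applied to $\widehat\Sigma$ gives $\|K\|_2^2 = N^d|\Sigma|\bigl(1-\mathrm{dens}(\Sigma)\bigr)$.

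Taking the $L^r$ norm of the convolution identity and applying Young's inequality in the form $\|f*K\|_r\le \|K\|_r\|f\|_1$, together with the H\"older bound $\|f\|_1\le |E|^{1-1/r}\|f\|_r$ afforded by $\mathrm{supp}(f)\subset E$, produces the scalar inequality
\[
N^d\bigl(1-\mathrm{dens}(\Sigma)\bigr)\;\le\; \|K\|_r\;|E|^{1-1/r}.
\]
Next, I would estimate $\|K\|_r$ by log-convex interpolation between $\|K\|_\infty$ (Salem) and $\|K\|_2$ (Plancherel): for $r\ge 2$,
\[
\|K\|_r \;\le\; \Lambda_{\mathrm{Salem}}^{1-2/r}\,|\Sigma|^{1/2}\,N^{d/r}\,\bigl(1-\mathrm{dens}(\Sigma)\bigr)^{1/r}.
\]

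Inserting this into the previous inequality, dividing through, and raising to the power $r/(r-1)$ gives the one-parameter family of uncertainty bounds
\[
|E|\cdot|\Sigma|^{r/(2(r-1))} \;\ge\; \frac{N^d\,(1-\mathrm{dens}(\Sigma))}{\Lambda_{\mathrm{Salem}}^{(r-2)/(r-1)}}, \qquad r\ge 2.
\]
Choosing $r=3$ produces the exponent $3/4$ on $|\Sigma|$ and exponent $1/2$ on $\Lambda_{\mathrm{Salem}}$, delivering the claimed inequality \eqref{salemmainestimate}. The main obstacle will be the clean verification of the convolution identity $f*K = N^d(1-\mathrm{dens}(\Sigma))f$ from the spectral support hypothesis, and the careful tracking of the Young/interpolation exponents so that $r=3$ simultaneously produces $|\Sigma|^{3/4}$ and the correct power of $\Lambda_{\mathrm{Salem}}$; the balance between the Salem $L^\infty$ endpoint and the Plancherel $L^2$ endpoint is exactly what forces this particular choice of $r$.
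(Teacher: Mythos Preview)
Your argument is correct on its own terms but follows a different route from the paper's, and it lands on a slightly weaker inequality than the one stated. The paper stays on the frequency side: from $|f(x)|\le |\Sigma|^{1/2}\bigl(\sum_{m\in\Sigma}|\widehat f(m)|^2\bigr)^{1/2}$ it writes $\Sigma(m)=\Sigma_0(m)+\mathrm{dens}(\Sigma)$ with $\widehat{\Sigma_0}(0)=0$, so that $(1-\mathrm{dens}(\Sigma))\sum_{m\in\Sigma}|\widehat f(m)|^2 = N^{-d}\sum_{x,y}\overline{f(x)}f(y)\,\widehat{\Sigma_0}(x-y)$, and then dominates the kernel \emph{pointwise} by the Salem bound. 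Your spatial-side convolution identity together with Young, H\"older on the support, and $L^2$--$L^\infty$ interpolation of the kernel is a clean alternative, and the one-parameter family of bounds it produces for $r\ge 2$ is a nice byproduct that the paper's argument does not display.

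There is, however, a discrepancy in the density factor. At $r=3$ your chain actually yields
\[
|E|\cdot|\Sigma|^{3/4}\ \ge\ \frac{N^d\,(1-\mathrm{dens}(\Sigma))}{\Lambda_{\mathrm{Salem}}^{1/2}},
\]
not $N^d\sqrt{(1-\mathrm{dens}(\Sigma))/\Lambda_{\mathrm{Salem}}}$ as in the theorem. Since $1-\mathrm{dens}(\Sigma)\le (1-\mathrm{dens}(\Sigma))^{1/2}$ for $\mathrm{dens}(\Sigma)\in(0,1)$, your bound is strictly weaker and does not establish \eqref{salemmainestimate} as stated. The loss is structural: in your convolution identity $f*K=N^d(1-\mathrm{dens}(\Sigma))f$ the density factor enters linearly on the left and survives the Young/H\"older/power steps unchanged, whereas in the paper's argument it appears in the bound on $\sum_{m\in\Sigma}|\widehat f(m)|^2$ \emph{before} the square root is taken, halving the exponent. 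To recover the exact statement you would need to mimic the paper's bilinear estimate at that step; alternatively, you should record that your method proves the variant with the first power of $1-\mathrm{dens}(\Sigma)$, which agrees with the stated bound up to a harmless factor whenever $\mathrm{dens}(\Sigma)$ is bounded away from $1$.
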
 

\vskip.125in 

\begin{remark} This estimate complements the results we obtained using Theorem \ref{mainUPwithRT}, Theorem \ref{restrictionlambda4} and Remark \ref{randomenergytheorem} because (\ref{salemmainestimate}) allows for a non-trivial result even in the case when $\Sigma={\mathbb Z}_N^d$, as long as $|E|$ is suitably small. 

Also, one can combine Theorem \ref{theoremsalem} and Theorem \ref{mainUPwithRT} producing a hybrid lower bound where the powers of both $|E|$ and $|S|$ are smaller than $1$. 
\end{remark} 

\vskip.125in 

\begin{remark} The interested reader can check that the proof of Theorem \ref{theoremsalem} given below also yields a restriction theorem. In the course of proving Theorem \ref{theoremsalem}, we show that 
\begin{equation} \label{l1bound} {\left( \frac{1}{|\Sigma|} \sum_{m \in \Sigma} {|\widehat{f}(m)|}^2 \right)}^{\frac{1}{2}} \leq 
\frac{N^{-d} \cdot {|\Sigma|}^{-\frac{1}{4}} \cdot \Lambda^{\frac{1}{2}}_{\text{Salem}} \cdot 
\sum_x |f(x)|}{\sqrt{1-dens(S)}}. \end{equation} 

We can also check using Plancherel that 
\begin{equation} \label{l2bound} {\left( \frac{1}{|\Sigma|} \sum_{m \in \Sigma} {|\widehat{f}(m)|}^2 \right)}^{\frac{1}{2}} \leq {|\Sigma|}^{-\frac{1}{2}} N^{-\frac{d}{2}} \cdot {\left( \sum_x {|f(x)|}^2 \right)}^{\frac{1}{2}} \end{equation} 
$$ =N^{-d} \cdot {\left( \frac{N^d}{|\Sigma|} \right)}^{\frac{1}{2}} {\left( \sum_x {|f(x)|}^2 \right)}^{\frac{1}{2}}. $$

Interpolating (\ref{l1bound}) and (\ref{l2bound}), we see that 
\begin{equation} \label{salemrestrictiontheoremestimate} {\left( \frac{1}{|\Sigma|} \sum_{m \in \Sigma} {|\widehat{f}(m)|}^2 \right)}^{\frac{1}{2}} \leq {\left( \Lambda_{\text{Salem}}^{\frac{1}{2}} \cdot {|\Sigma|}^{-\frac{1}{4}} \right)}^{1-\frac{2}{p'}} \cdot {\left( \frac{N^d}{|\Sigma|} \right)}^{\frac{1}{p'}} \cdot N^{-d} {\left( \sum_x {|f(x)|}^p \right)}^{\frac{1}{p}} \end{equation} 
$$ =\Lambda_{\text{Salem}}^{\frac{1}{2}-\frac{1}{p'}} {|\Sigma|}^{-\frac{1}{4}-\frac{1}{2p'}} N^{\frac{d}{p'}} \cdot N^{-d} \cdot {\left( \sum_x {|f(x)|}^p \right)}^{\frac{1}{p}}$$ for $1 \leq p \leq 2$. 

\vskip.125in 

In particular, if $|\Sigma| \approx N^{\alpha}$, $0<\alpha<d$ then under the assumptions of Theorem \ref{theoremsalem}, we obtain a $(p,2)$ restriction theorem for $p' \ge \frac{2(2d-\alpha)}{\alpha}$. See, for example, \cite{MT04} for analogous results. 

\end{remark}


\vskip.125in 
 
Theorem \ref{theoremsalem} leads to the natural question of which sets $S$ are Salem at level $\Lambda_{\text{Salem}}$, and this leads us to consider the following. Given $A \subset {\mathbb Z}_N^d$, define 
$$ \Phi(A)=\max \left\{|\widehat{A}(m)|: m \in {\mathbb Z}_N^d; \ m \not=\vec{0} \right\}.$$

Given that $|A|\leq \frac{N^d}{2}$, the quantity   $\Phi(A)$ is bounded from below and above as 
$$ N^{-d} \sqrt{\frac{|A|}{2}} \leq \Phi(A) \leq N^{-d}|A|,$$ where the upper bound follows from direct domination and the lower bound follows from Plancherel theorem and the assumption on the size of $A$. (For a detailed proof,  we refer to Proposition 2.6 in \cite{Babai2002}.) The following result addresses the question we raised above about when we can expect a Salem type estimate to hold. 

\begin{proposition}[\cite{Babai2002}, Proposition 5.2] \label{randomisgood} Let $\epsilon>0$. For all but $O(N^{-d\epsilon})$ subsets $A$ of ${\mathbb Z}_N^d$ of  size  $|A| \leq \frac{N^d}{2}$
\begin{equation} \label{nirvana} \Phi(A)< N^{-d} \sqrt{(1+\epsilon)|A| \cdot d \cdot \log(N)}. \end{equation} 
\end{proposition}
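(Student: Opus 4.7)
The plan is to run a Chernoff/Hoeffding-type concentration argument on the Fourier coefficients of a uniformly random subset $A\subset\mathbb Z_N^d$ (equivalently: each element is included in $A$ independently with probability $1/2$), then take a union bound over the $N^d-1$ nonzero frequencies.

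First, I would introduce the indicator variables $\xi_x=\mathbf{1}_{x\in A}$ and observe that since $\sum_x\chi(-x\cdot m)=0$ for $m\neq\vec0$,
\begin{equation*}
\widehat A(m)=N^{-d}\sum_{x\in\mathbb Z_N^d}\bigl(\xi_x-\tfrac12\bigr)\chi(-x\cdot m),
\end{equation*}
which is a sum of independent, bounded, mean-zero complex random variables. Applying Hoeffding's inequality separately to $\mathrm{Re}\,\widehat A(m)$ and $\mathrm{Im}\,\widehat A(m)$ yields a bound of the form
\begin{equation*}
P\bigl(|\widehat A(m)|\ge t\bigr)\le 4\exp(-c\,N^d t^2).
\end{equation*}
When $2m\neq\vec0$ one may use the identity $\sum_x\cos^2(2\pi x\cdot m/N)=N^d/2$ to pin down the sharp constant $c$, which is what produces the precise $(1+\epsilon)$ factor in (\ref{nirvana}); the finitely many exceptional $m$ with $2m=\vec0$ can be absorbed into the union bound trivially.

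Second, I would take a union bound over the at most $N^d$ nonzero frequencies, obtaining
\begin{equation*}
P\bigl(\Phi(A)\ge t\bigr)\le 4N^d\exp(-cN^d t^2).
\end{equation*}
Setting $t=N^{-d/2}\sqrt{(1+\epsilon/2)\,d\log N\,/\,c}$ makes the right-hand side $O(N^{-d\epsilon})$. A parallel one-dimensional Chernoff bound applied to $|A|=\sum_x\xi_x$ shows $|A|\ge N^d/2-O(N^{d/2}\sqrt{\log N})$ except on an event of probability $O(N^{-d\epsilon})$, so on this high-probability event the chosen $t$ is bounded above by the target threshold $N^{-d}\sqrt{(1+\epsilon)|A|\,d\log N}$ for $N$ sufficiently large. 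Combining the two high-probability events establishes (\ref{nirvana}) with probability $1-O(N^{-d\epsilon})$ under the Bernoulli$(1/2)$ model.

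Finally, I would translate this probabilistic statement into the counting statement in the proposition: the Bernoulli$(1/2)$ distribution is uniform on all $2^{N^d}$ subsets, and by the symmetry $A\mapsto A^c$ at least half of all subsets satisfy $|A|\le N^d/2$, so the fraction of \emph{subsets with $|A|\le N^d/2$} that violate (\ref{nirvana}) is also $O(N^{-d\epsilon})$.

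The main obstacle is not the concentration step itself, which is routine, but the constant-tracking: the statement has a sharp factor $(1+\epsilon)$ inside the square root, so one must use the sharp form of Hoeffding, exploit $\sum_x\cos^2=N^d/2$ to save a factor of $\sqrt2$, and arrange for the deterministic slack from replacing $|A|$ by $N^d/2$ (valid only once $|A|$ is shown to concentrate near $N^d/2$) to be swallowed by the gap between the intermediate $(1+\epsilon/2)$ and the target $(1+\epsilon)$.
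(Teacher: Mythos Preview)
The paper does not supply its own proof of this proposition; it is quoted directly from Babai's lecture notes \cite{Babai2002}, Proposition 5.2, so there is no in-paper argument to compare against. Your outline---Hoeffding concentration on each nonzero Fourier coefficient under the Bernoulli$(1/2)$ model, a union bound over the $N^d-1$ nonzero frequencies, a separate Chernoff estimate to pin $|A|$ near $N^d/2$, and finally the symmetry $A\mapsto A^c$ to pass to the counting statement---is the standard route for results of this type and is what one finds in Babai's notes.

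One refinement on the constant-tracking you already flag as the main obstacle: splitting into real and imaginary parts via $|\widehat A(m)|\ge t\Rightarrow\max(|\mathrm{Re}|,|\mathrm{Im}|)\ge t/\sqrt2$ yields only $c=2$ in your tail bound $4\exp(-cN^dt^2)$, and with $c=2$ the union bound forces $t^2\ge(1+\epsilon)d\log(N)/(2N^d)$ to hit the target failure probability $O(N^{-d\epsilon})$, leaving \emph{zero} slack against the desired $t^2\le(1+\epsilon)|A|\,d\log(N)/N^{2d}$ at $|A|=N^d/2$; your intermediate $(1+\epsilon/2)$ then only delivers $O(N^{-d\epsilon/2})$. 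The clean fix is to write $|\widehat A(m)|=\sup_\theta\mathrm{Re}\bigl(e^{i\theta}\widehat A(m)\bigr)$ and take a finite net over $\theta$: since $\sum_x\cos^2(2\pi x\cdot m/N-\theta)=N^d/2$ for every $\theta$ whenever $2m\ne\vec0$, this pushes $c$ arbitrarily close to $4$ at the cost of a harmless constant prefactor, after which the concentration of $|A|$ near $N^d/2$ provides genuine room and the stated $(1+\epsilon)$ and $O(N^{-d\epsilon})$ both go through.
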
 

\begin{remark} The proof of Proposition \ref{randomisgood} shows that if $A$ of a given size $\leq \frac{N^d}{2}$ is chosen randomly, with respect to the uniform probability distribution on ${\mathbb Z}_N^d$, then for any $\epsilon>0$, (\ref{nirvana}) holds with probability $1-N^{-d \epsilon}$. 

Also, observe that the random subset $S$ is significantly smaller than the total size of ${\mathbb Z}_N^d$, specifically less than half of it, chosen uniformly at random.
\end{remark}

\vskip.125in

\vskip.125in 
 
\subsection{The relationships between the parameters $\Lambda_{\text{size}}$, $\Lambda_{\text{Salem}}$ and $\Lambda_{\text{energy}}$} 

We are now going to exhibit some interesting relationships between the parameters we have been repeatedly using. 

\vskip.125in 

\begin{itemize}
\item[i)] ($\Lambda_{\text{Salem}}$ bound) As we noted above, the inequality 
$$ |\widehat{S}(z)| \leq \Lambda_{\text{Salem}} N^{-d} {|S|}^{\frac{1}{2}}$$ always holds for any set $S$ with $\Lambda_{\text{Salem}}={|S|}^{\frac{1}{2}}$. 

\vskip.125in 

\item[ii)] ($\Lambda_{\text{energy}}$ bound) The inequality 
$$ |\{(x,y,x',y') \in S^4: x+y=x'+y'\}| \leq \Lambda_{\text{energy}} {|S|}^2$$ always holds with $\Lambda_{\text{energy}}=|S|$, since we can fix $x,y,x'$ and solve for $y'$. 

\vskip.125in 

\item[iii)] (Random $\Lambda_{\text{Salem}}$ bound) It is also important to note that Proposition \ref{randomisgood} implies that if $S$ is chosen randomly and $|S|<\frac{N^d}{2}$, then with probability $1-N^{-d \epsilon}$ we may take $\Lambda_{\text{Salem}} \leq \sqrt{(1+\epsilon) \log(N)}$.

\vskip.125in 

\item[iv)] ($\Lambda_{\text{Salem}}$ versus $\Lambda_{\text{energy}}$) 
By a simple calculation, we have  
 \begin{align}\notag \sum_z {|\widehat{S}(z)|}^4&=N^{4d} \sum_{x,y,x',y'} \chi(z \cdot (x+y-x'-y')) S(x)S(y)S(x')S(y')\\\notag
&=N^{-3d} |\{(x,y,x',y') \in S^4: x+y=x'+y'\}|, 
\end{align} 
i.e., 
\begin{align}\label{energy} |\{(x,y,x',y') \in S^4: x+y=x'+y'\}|=N^{3d} \sum_z {|\widehat{S}(z)|}^4. 
\end{align}
 
 Suppose that $S$ satisfies $|\widehat{S}(z)| \leq \Lambda_{\text{Salem}} N^{-d} \cdot {|S|}^{\frac{1}{2}}$ for $z \not=(0, \dots, 0)$. By this assumption, 
 the right-hand side of \eqref{energy} is bounded by 
$$ N^{3d} \cdot \Lambda_{\text{Salem}}^2 \cdot N^{-2d} \cdot |S| \cdot \sum_z {|\widehat{S}(m)|}^2.$$ 

By Plancherel, this expression equals 
$$ \Lambda_{\text{Salem}}^2 \cdot {|S|}^2,$$ from which we conclude that 
\begin{equation} \label{energyvsfourier} \Lambda_{\text{energy}} \leq \Lambda_{\text{Salem}}^2. \end{equation} 

\vskip.125in 

\item[v)] The calculation above shows that a good Fourier bound (small $\Lambda_{\text{Salem}}$) leads to a good energy bound (small $\Lambda_{\text{energy}}$). We are about to see that the converse is much more problematic. 

\vskip.125in 

Here is a sketch of an example in the prime setting, but similar examples can be constructed for any $N$. Let $N$ be a large prime number, and let $E$ denote the disjoint union of the parabola 
$\{x \in {\mathbb Z}_p^2: x_2=x_1^2\}$ and an arithmetic progression on a line of length $\approx p^{\alpha}$, with $0<\alpha<\frac{2}{3}$. A direct calculation shows that 
$$ |\{(a,b,c,d) \in E^4: a+b=c+d \}| \approx {|E|}^2$$ because rich additive properties of the arithmetic progression on a line do not interfere the poor additive properties of the parabola because the arithmetic progression is too small. Please note that this calculation requires the primality of $N$. 

Now, $\widehat{E}(m)=\widehat{S}(m)+\widehat{L}(m)$, where $S$ is the indicator function of the parabola, and $L$ is the indicator function of a line. By classical Gauss sum estimates (see e.g. \cite{IR07}), 
$$ |\widehat{S}(m)| \leq N^{-\frac{3}{2}},$$ and this estimate is exact for all $m$ with $m_2 \not=0$. On the other hand, it is not difficult to find $m$ with $m_2 \not=0$ such that $|\widehat{L}(m)| \approx |L|p^{-2}=p^{\alpha-2}$. It follows that 
$$ |\widehat{E}(m)| \approx p^{-\frac{3}{2}}+p^{\alpha-2} \approx p^{\alpha-2}=p^{-\frac{3}{2}} \cdot p^{\alpha-\frac{1}{2}},$$ so 
$$ \Lambda_{\text{Salem}}=p^{\alpha-\frac{1}{2}}$$
as long as $\alpha>\frac{1}{2}$. On the other hand, $\Lambda_{\text{energy}} \approx 1$ in this case. This shows that the inequality (\ref{energyvsfourier}) can be very far from equality. 

\vskip.125in 

\item[vi)] In view of Remark \ref{randomenergytheorem}, in the case when $S$ is chosen randomly, $\Lambda_{\text{energy}} \leq \Lambda_{\text{size}}^2$. 

\end{itemize}

\section{Signal recovery in $\Bbb Z_N^d$}
\label{ersection}

\subsection{Exact recovery via restriction theory} We begin with the exact recovery mechanism that follows from Theorem \ref{mainUPwithRT}. 

\begin{corollary}[Exact Recovery via Restriction Estimation]\label{ERwithRT} Let $f:\Bbb Z_N^d\to \Bbb C$ be a signal supported in $E \subset {\mathbb Z}_N^d$. Let $r$ be a frequency bandlimited signal 
such that 
$$
  \widehat{r}(m) =
\begin{cases}
    \widehat{f}(m), & \text{for } m \notin S \\
    0, & \text{otherwise}. 
\end{cases}
$$
 
 Suppose that (\ref{restrictionequation}) holds for $S$,  the set of unobserved frequencies of $f$. Then $f$ can be reconstructed from $r$  uniquely if 
\begin{align}\label{ineq:x} {|E|}^{\frac{1}{p}} \cdot |S| < \frac{N^d}{2^{\frac{1}{p}} C_{p,q}}. 
\end{align}
\end{corollary}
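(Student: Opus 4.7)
The plan is to carry over the Donoho--Stark uniqueness argument reviewed in Section~\ref{subsubsectionuptoer}, replacing the classical uncertainty principle with the refined restriction-based uncertainty principle of Theorem~\ref{mainUPwithRT}. Concretely, I would argue by contradiction: suppose $g$ is another signal supported in some set $E' \subset {\mathbb Z}_N^d$ with $|E'| \le |E|$ and with $\hat g(m) = \hat f(m)$ for every $m \notin S$. Setting $h := f - g$, I get $\mathrm{supp}(h) \subset E \cup E'$, hence $|\mathrm{supp}(h)| \le 2|E|$, while $\hat h$ vanishes off $S$, i.e., $\mathrm{supp}(\hat h) \subset S$.

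Now, if $h \not\equiv 0$, the restriction estimate \eqref{restrictionequation} is available for $S$ by assumption, so Theorem~\ref{mainUPwithRT} applies to $h$ and yields
\[
(2|E|)^{1/p} \cdot |S| \;\ge\; |\mathrm{supp}(h)|^{1/p} \cdot |\mathrm{supp}(\hat h)| \;\ge\; \frac{N^d}{C_{p,q}}.
\]
Dividing by $2^{1/p}$ gives $|E|^{1/p} \cdot |S| \ge N^d / (2^{1/p} C_{p,q})$, which directly contradicts the hypothesis \eqref{ineq:x}. Therefore $h \equiv 0$ and $g = f$, proving the uniqueness claim. The reconstruction procedure itself is then the natural one: solve for the signal supported in a set of size at most $|E|$ whose Fourier coefficients match $\hat r$ on ${\mathbb Z}_N^d \setminus S$; any such solution must coincide with $f$.

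There is really no major obstacle, since this is a one-line adaptation of the uniqueness half of Donoho--Stark; the only points that need minor care are (i) verifying that the factor $2^{1/p}$ emerging from $|\mathrm{supp}(h)| \le 2|E|$ is exactly the one appearing in \eqref{ineq:x}, which is why the hypothesis is stated with $2^{1/p}$ rather than $2$, and (ii) noting that the restriction estimate is stated for \emph{arbitrary} $f : {\mathbb Z}_N^d \to {\mathbb C}$, so we may legitimately apply it to the difference $h$ even though the corollary is formulated in terms of signals supported in $E$. The strict inequality in \eqref{ineq:x} is what turns the potential equality case of Theorem~\ref{mainUPwithRT} into a strict contradiction.
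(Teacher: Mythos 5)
Your proof is correct and is exactly the argument the paper intends: it invokes the Donoho--Stark uniqueness mechanism from Subsection~\ref{subsubsectionuptoer} with the classical uncertainty principle replaced by Theorem~\ref{mainUPwithRT}, and correctly tracks the factor $2^{\frac{1}{p}}$ coming from $|\mathrm{supp}(h)| \le 2|E|$. (The cleanest way to phrase the final step is to apply Theorem~\ref{mainUPwithRT} with $\Sigma = S$, since $\widehat{h}$ is supported in $S$ and the restriction estimate is assumed for $S$ rather than for $\mathrm{supp}(\widehat{h})$ itself, but this is exactly what your argument amounts to.)
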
 

\vskip.125in 

This result follows from Theorem \ref{mainUPwithRT} using the Donoho-Stark mechanism described in Subsection \ref{subsubsectionuptoer}. 

\vskip.125in 

\begin{remark} In view of Remark \ref{randomenergytheorem}, we can replace the assumption on $S$ in Corollary \ref{ERwithRT} with the assumption that $S$ is randomly selected with uniform probability and satisfies the size condition \eqref{ineq:x}. 
\end{remark} 

\vskip.125in 

\subsection{DRA algorithm and recovery of  0-1 signals via restriction theory} \label{simplerecoverysubsection}
\footnote{Note that there are many reasons why someone may want to transmit a higher dimensional signal. For example, a graph on $N$ vertices can be specified via its adjacency matrix, which is an $N$ by $N$ matrix of $1$s and $0$s, which can be encoded as an indicator function of a subset of ${\mathbb Z}_N^2$ corresponding to the $1$ entries in the matrix.} Donoho and Stark \cite{DS89} provide an algorithm for reconstructing the signal $f$, with a certain degree of complexity. Our observation is that in the case of $0-1$ signals, the recovery mechanism is very simple and via  Direct Rounding Algorithm under more ``sparsity" conditions, as we illustrate in the next theorem. 

\vskip.12in 

\begin{definition}[Direct Rounding Algorithm (DRA)] \label{dradefinition} Let $E, S\subset {\mathbb Z}_N^d$ and let $E(x)$ denote its indicator function. Suppose that the values of $\widehat{E}(m)$ are not known for $m \in S$. Let $r$ be a frequency bandlimited signal obtained by  a sharp frequency ``cut-off" map $P_B$: 
$$ r := P_B(E),$$
Then $$r(x) = \sum_{m\in B} \hat E(m)  \chi(m\cdot x),$$
and  $\widehat{r}(m)=\widehat{E}(m)$ for $m \notin S$, and $0$ otherwise.

\vskip.125in 

Let $G(x)$ be defined as follows. If $|r(x)| \ge .5$, then $G(x)=1$, otherwise $G(x)=0$. We say that $E$ can be recovered via the {\it Direct Rounding Algorithm}   if $E(x)=G(x)$ for all $x \in {\mathbb Z}_N^d$. 
\end{definition} 

\begin{figure}
\label{IDFT-signelton}
\centering
d\includegraphics[scale=.5]{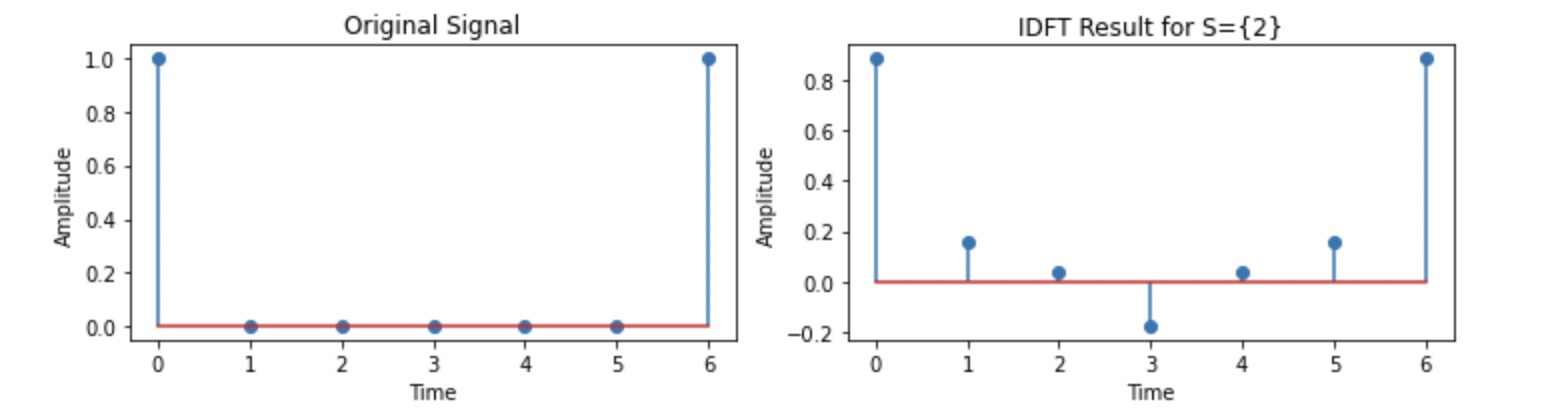}
 \caption{\tiny{Left: Graph of original 1-bit signal.   Right: Inverse discrete Fourier transform of the signal in the presence of a single missing Fourier measurement $S$. The recovery of original signal is obtained from DRA (or threshelding) of  IDFT.}} 
\end{figure}

 The next results illustrate that the Direct Rounding Algorithm  can be   effectively applied  for the recovery of  binary signals under some specific  size conditions for the sets $E$ and $S$. 

\begin{theorem}\label{DRAtheorem} Let $E$ be a binary signal in ${\mathbb Z}_N^d$. 
\begin{itemize}
\item[i)] Suppose that the frequencies in $S \subset {\mathbb Z}_N^d$ are unobserved. Then $E$ can be recovered via DRA provided that
\begin{equation} \label{simplerecoveryequation} |E| \cdot |S|<\frac{N^d}{4}, \end{equation}
holds. 
\item[ii)] Suppose that the frequencies in $S \subset {\mathbb Z}_N^d$ are unobserved and $S$ satisfies the restriction estimate (\ref{restrictionequation}), then $E$ can be recovered via DRA provided that  
\begin{equation} \label{RTDRAequation} {|E|}^{\frac{1}{p}} \cdot |S|<\frac{N^d}{2C_{p,q}},\end{equation} 
holds. 
\end{itemize}
\end{theorem}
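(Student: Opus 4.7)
My strategy is to reduce the DRA recovery criterion to a uniform pointwise bound on the aliasing error
\[
h(x) := E(x) - r(x) = \sum_{m \in S} \widehat{E}(m)\,\chi(m\cdot x),
\]
and then bound $\|h\|_\infty$ using Plancherel in part (i) and the restriction hypothesis in part (ii). Note that $h$ is exactly the ``missing frequencies'' piece of the Fourier inversion formula \eqref{FIN}.

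\textbf{Step 1 (reduction to $\|h\|_\infty < 1/2$).} Since $E$ is binary, I first observe that DRA succeeds as soon as $|h(x)| < 1/2$ for every $x \in \mathbb{Z}_N^d$. Indeed, at a point with $E(x)=1$ the reverse triangle inequality gives $|r(x)| = |1 - h(x)| \geq 1 - |h(x)| > 1/2$, so $G(x)=1$; at a point with $E(x) = 0$ we have $|r(x)| = |h(x)| < 1/2$, so $G(x)=0$. Thus the thresholding in Definition \ref{dradefinition} returns $E$ pointwise.

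\textbf{Step 2 (proof of (i)).} For any $x$, Cauchy--Schwarz followed by Plancherel \eqref{PI} yields
\begin{equation*}
|h(x)|^2 \leq |S|\sum_{m \in S}|\widehat{E}(m)|^2 \leq |S|\sum_{m \in \mathbb{Z}_N^d}|\widehat{E}(m)|^2 = |S|\cdot N^{-d}\sum_{x \in \mathbb{Z}_N^d}|E(x)|^2 = |S|\cdot |E|\cdot N^{-d},
\end{equation*}
where the last equality uses that $E$ is $0/1$-valued so $|E(x)|^2 = E(x)$. The hypothesis $|E|\cdot|S| < N^d/4$ gives $|h(x)| < 1/2$ uniformly in $x$, and Step 1 concludes part (i).

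\textbf{Step 3 (proof of (ii)).} I now control $h$ more efficiently using the $(p,q)$-restriction estimate on $S$. The triangle inequality followed by H\"older in the counting measure on $S$ gives
\begin{equation*}
|h(x)| \leq \sum_{m \in S}|\widehat{E}(m)| \leq |S|^{1-1/q}\Bigl(\sum_{m \in S}|\widehat{E}(m)|^q\Bigr)^{1/q}.
\end{equation*}
Applying \eqref{restrictionequation} with $f = E$ and using $\sum_x |E(x)|^p = |E|$ (again because $E$ is binary), the right-hand side is bounded by $|S|\cdot C_{p,q}\cdot N^{-d}\cdot |E|^{1/p}$. Under the hypothesis ${|E|}^{1/p}\cdot |S| < N^d/(2C_{p,q})$ this is strictly less than $1/2$, and Step 1 again concludes.

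\textbf{Main difficulty.} There is no substantial obstacle: the argument reduces cleanly to an $L^\infty$ bound on the aliasing error. The only subtle point, which pins down the constant $1/2$ in both thresholds, is the tight interplay between the rounding threshold in Definition \ref{dradefinition} and the magnitude of $h$. In particular, the binary nature of $E$ is essential for the reverse triangle step; a signal valued in $\{0,1,2,\dots\}$ would force a correspondingly tighter bound $\|h\|_\infty < 1/2$ relative to the gap between consecutive values, changing the constants in \eqref{simplerecoveryequation} and \eqref{RTDRAequation}.
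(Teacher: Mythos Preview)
Your proof is correct and follows essentially the same route as the paper: both decompose the Fourier inversion into the observed part $r(x)$ and the missing-frequency error $h(x)=II(x)=\sum_{m\in S}\widehat{E}(m)\chi(m\cdot x)$, bound $\|h\|_\infty$ by Cauchy--Schwarz plus Plancherel for (i) and by H\"older plus the restriction estimate \eqref{restrictionequation} for (ii), and then invoke the rounding threshold. Your Step~1 makes the reverse-triangle justification for the rounding step slightly more explicit than the paper does, but the argument is otherwise identical.
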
 
\begin{proof}
 (i) 
Let $E \subset {\mathbb Z}_N^d$ and let $E(x)$ denote its indicator function, i.e., $E(x)=1$ when $x\in E$ and $E(x)=0$ otherwise. Suppose that  $S \subset {\mathbb Z}_N^d$. We can write 
\begin{align} E(x)&=\sum_{m \in {\mathbb Z}_N^d} \chi(x \cdot m) \widehat{E}(m) \\\notag
 &=\sum_{m \notin S} \chi(x \cdot m) \widehat{E}(m)+\sum_{m \in S} \chi(x \cdot m) \widehat{E}(m)=I(x)+II(x).\end{align}

Suppose that the frequencies in $S$ are unobserved. Under the assumption \eqref{simplerecoveryequation}, the signal $E$ can be recovered directly via DRA. Indeed, by the Cauchy-Schwarz inequality, we estimate the error term from above: 
\begin{align}
    \label{DRAequation} |II(x)| & \leq {|S|}^{\frac{1}{2}} \cdot {\left( \sum_{m \in S} {|\widehat{E}(m)|}^2 \right)}^{\frac{1}{2}}  \\  
 & \leq {|S|}^{\frac{1}{2}} \cdot {\left( \sum_{m \in {\mathbb Z}_N^d} {|\widehat{E}(m)|}^2 \right)}^{\frac{1}{2}} =N^{-\frac{d}{2}} {|S|}^{\frac{1}{2}} \cdot {|E|}^{\frac{1}{2}}. 
\end{align}  

 Notice that by the assumption \eqref{simplerecoveryequation} we have
 
\begin{equation} \label{simplerecoveryequation} |II(x)|<\frac{1}{2}. 
\end{equation}
Now, by applying DRA to  $r(x)= E(x)-I(x)$,  we can successfully recover the entire singal $E$. 

\vskip.12in 

We note that the DRA algorithm, described above, is executed as follows in this context. We take $I(x)$, compute its complex modulus, then round up to $1$ if $|E(x)-I(x)| \ge 0.5$, and round it down to $0$ otherwise. This is because $E(x)$ is equal to $1$ or $0$ and the error of $<\frac{1}{2}$ does not interfere with the rounding process. 





\vskip.125in 

(ii)
With the assumption that the restriction estimation (\ref{restrictionequation}) holds for  $S$, we proceed to adapt our previous argument as follows.  We have 
$$ |II(x)| \leq {|S|}^{\frac{1}{q'}} \cdot {\left( \sum_{m \in S} {|\widehat{E}(m)|}^q \right)}^{\frac{1}{q}} =|S| \cdot {\left( \frac{1}{|S|} \sum_{m \in S} {|\widehat{E}(m)|}^q \right)}^{\frac{1}{q}}$$
$$ \leq C_{p,q} N^{-d} \cdot |S| \cdot {\left( \sum_{x \in {\mathbb Z}_N^d} {|E(x)|}^p \right)}^{\frac{1}{p}}=C_{p,q} N^{-d} \cdot |S| \cdot {|E|}^{\frac{1}{p}}.$$ 

We conclude that exact recovery via DRA is possible for $0-1$ signals under the assumption  (\ref{restrictionequation}), provided that 
\begin{equation} \label{RTDRAequation} {|E|}^{\frac{1}{p}} \cdot |S|<\frac{N^d}{2C_{p,q}},\end{equation} a slightly more stringent condition than the one in Corollary \ref{ERwithRT}. 
 \end{proof}

 \begin{remark}
     In Theorem \ref{DRAtheorem}  (i),  we achieve a very simple exact recovery process. The price that we pay for this simple algorithm is that (\ref{simplerecoveryequation}) is more restrictive, by a factor of $\frac{1}{2}$, compared to the condition $|E| \cdot |S| <\frac{N^d}{2}$ that arises when we prove the exact recovery directly using the uncertainty principle in (\ref{UPdequation}). The same 
     argument holds true for 
     (ii).  
 \end{remark}

\vskip.125in 

\begin{remark} It is interesting to note that if $f: {\mathbb Z}_N^d$ has a bounded range and takes only a finite number of values, then the DRA mechanism can be applied, much like above, except that we need to bound $|II(x)|$ by $\frac{1}{2k}$ instead of $\frac{1}{2}$. 
\end{remark}

\subsection{Signal recovery via the Salem uncertainty principle } \label{simplerecoveryfouriersubsection}
\label{sizebinarysubsection} We are now going to explore the exact recovery consequences of the Salem Uncertainty Principle (Theorem \ref{theoremsalem}). Our main result in this direction is the following. 

\begin{theorem} \label{mainsalembasic} Let $f:\Bbb Z_N^d\to \Bbb C$ be a signal supported in $E \subset {\mathbb Z}_N^d$. Let $r$ be a frequency bandlimited signal obtained by  a sharp frequency ``cut-off" map $P_B$: 
$$ r := P_B(f),$$
where $P_B=\mathcal F^{-1} \chi_B\mathcal F$ and $B=\Bbb Z_N^d\setminus S$. Then $\widehat{r}(m)=\widehat{f}(m)$ for $m \notin S$, and $0$ otherwise. Suppose that $S$ is Salem at level $\Lambda_{\text{Salem}}$. Then $f$ can be reconstructed from $r$  uniquely if 
\begin{equation} \label{salemconditionformula} |E| \cdot {|S|}^{\frac{3}{4}} < \frac{1}{2} \cdot N^d \cdot \sqrt{\frac{1-dens(S)}{\Lambda_{\text{Salem}}}}.\end{equation}

\end{theorem}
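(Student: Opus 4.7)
The plan is to mirror the Donoho--Stark argument from Subsection \ref{subsubsectionuptoer}, with the Salem Uncertainty Principle (Theorem \ref{theoremsalem}) playing the role that the classical uncertainty principle (\ref{UPdequation}) played in the original proof. Since the problem reduces to showing that no nonzero ``difference signal'' can be simultaneously supported on a small set in physical space and have its Fourier transform supported in $S$, Theorem \ref{theoremsalem} is tailored exactly for this setup.

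First, I would set up the uniqueness contradiction. Suppose $f_1, f_2 : \mathbb Z_N^d \to \mathbb C$ are both candidate reconstructions, each supported in a set of size at most $|E|$, and both consistent with the observed partial Fourier data, i.e.\ $\widehat{f_1}(m) = \widehat{f_2}(m) = \widehat{r}(m)$ for every $m \notin S$. Put $h = f_1 - f_2$. Then on the physical side
$$\mathrm{supp}(h) \subseteq \mathrm{supp}(f_1) \cup \mathrm{supp}(f_2), \qquad |\mathrm{supp}(h)| \leq 2|E|,$$
and on the frequency side $\widehat{h}(m) = 0$ for all $m \notin S$, so $\mathrm{supp}(\widehat{h}) \subseteq S$.

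Second, assuming $h \not\equiv 0$, I would apply Theorem \ref{theoremsalem} to $h$ with the role of ``$E$'' played by $\mathrm{supp}(h)$ and the role of ``$\Sigma$'' played by $S$ itself. The hypothesis that $S$ is Salem at level $\Lambda_{\text{Salem}}$ is exactly what the theorem requires, and $\mathrm{supp}(\widehat{h}) \subseteq S$ means the support hypothesis on $\widehat h$ is met (we may take $\Sigma = S$ regardless of whether $\mathrm{supp}(\widehat h)$ fills $S$, because the conclusion of Theorem \ref{theoremsalem} is monotone in $|\Sigma|$ in the direction we need: replacing $\mathrm{supp}(\widehat h)$ by the possibly larger $S$ only weakens the bound, which is in our favor since we will derive the contradiction from this weaker bound). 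Theorem \ref{theoremsalem} then gives
$$|\mathrm{supp}(h)| \cdot |S|^{3/4} \;\geq\; N^d \cdot \sqrt{\frac{1 - \mathrm{dens}(S)}{\Lambda_{\text{Salem}}}}.$$
Combining with $|\mathrm{supp}(h)| \leq 2|E|$ yields
$$|E| \cdot |S|^{3/4} \;\geq\; \tfrac{1}{2} N^d \sqrt{\frac{1 - \mathrm{dens}(S)}{\Lambda_{\text{Salem}}}},$$
which directly contradicts the hypothesis (\ref{salemconditionformula}). Therefore $h \equiv 0$, so $f_1 = f_2$, and unique reconstruction of $f$ from $r$ is established.

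There is no serious obstacle: the entire content of the theorem is pre-packaged in Theorem \ref{theoremsalem}, and the only non-trivial move is the factor of $2$ coming from the crude bound $|\mathrm{supp}(f_1) \cup \mathrm{supp}(f_2)| \leq 2|E|$, which is precisely why the threshold in (\ref{salemconditionformula}) carries the prefactor $\tfrac{1}{2}$ compared to the bare uncertainty inequality (\ref{salemmainestimate}). The one subtle point worth flagging in the write-up is the choice $\Sigma = S$ rather than $\Sigma = \mathrm{supp}(\widehat h)$, since the Salem hypothesis is imposed on $S$; this is legitimate because enlarging $\Sigma$ to $S$ preserves the support condition $\mathrm{supp}(\widehat h) \subseteq \Sigma$ required by Theorem \ref{theoremsalem}.
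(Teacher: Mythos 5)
Your proposal is correct and is exactly the argument the paper intends: the authors simply state that Theorem \ref{mainsalembasic} ``follows from Theorem \ref{theoremsalem} using the Donoho--Stark mechanism,'' which is precisely your difference-signal argument with $h=f_1-f_2$, the bound $|\mathrm{supp}(h)|\leq 2|E|$, and $\Sigma=S$. Your handling of the factor $\tfrac{1}{2}$ and of the choice $\Sigma=S$ (legitimate since the Salem hypothesis and the support containment are both stated for $S$) matches the paper's intent.
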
 

\vskip.125in 

This result follows from Theorem \ref{theoremsalem} using the Donoho-Stark mechanism described in Subsection \ref{subsubsectionuptoer}. 

\vskip.125in 

\begin{remark} In view of Proposition \ref{randomisgood}, we can replace the assumption on $S$ in Theorem \ref{mainsalembasic} by the assumption that $S$ is chosen randomly with respect to uniform probability. Then the conclusion that $f$ can be reconstructed from $r$  uniquely if (\ref{salemconditionformula}) holds with $\Lambda_{\text{Salem}}=\log((1+\epsilon) \cdot d \cdot N)$ is valid with probability $1-N^{-d \epsilon}$. 
\end{remark} 

\vskip.125in 

In the realm of $0-1$ signals, we can use Theorem \ref{theoremsalem} and run the DRA mechanism from Subsection \ref{simplerecoverysubsection} to obtain the following result. 

\begin{theorem} \label{salemdratheorem} Let $E \subset {\mathbb Z}_N^d$ and identify $E$ with its indicator function. Let $r$ be a frequency bandlimited signal obtained by a sharp frequency ``cut-off" map $P_B$: 
$$ r := P_B(E),$$
where $P_B=\mathcal F^{-1} \chi_B\mathcal F$ and $B=\Bbb Z_N^d\setminus S$. Then $\widehat{r}(m)=\widehat{E}(m)$ for $m \notin S$, and $0$ otherwise. Suppose that $S$ is Salem at level $\Lambda_{\text{Salem}}$. Then $E$ can be reconstructed from $r$ uniquely via the DRA (Direct Rounding Algorithm) if 
$$ |E| \cdot {|S|}^{\frac{3}{4}} < \frac{1}{2} \cdot N^d \cdot \sqrt{\frac{1-dens(S)}{\Lambda_{\text{Salem}}}}.$$ 
\end{theorem}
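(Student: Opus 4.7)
The plan is to reduce the statement to a pointwise bound on $|E(x)-r(x)|$ and then feed that bound into the $L^{1}\to L^{2}$ restriction-type estimate \eqref{l1bound} that was established inside the proof of Theorem~\ref{theoremsalem}. Using the Fourier inversion formula, split
\[
E(x) \;=\; \underbrace{\sum_{m \notin S} \chi(x \cdot m)\,\widehat{E}(m)}_{=\,r(x)} \;+\; \underbrace{\sum_{m \in S} \chi(x \cdot m)\,\widehat{E}(m)}_{=:\,II(x)},
\]
so that $r(x) = E(x) - II(x)$. Since $E$ is $\{0,1\}$-valued, the DRA output $G$ coincides with $E$ as soon as $|II(x)| < \tfrac12$ for every $x$: when $E(x)=1$, this gives $|r(x)| = |1 - II(x)| \ge 1 - |II(x)| > \tfrac12$, and when $E(x)=0$, it gives $|r(x)| = |II(x)| < \tfrac12$.

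The next step is to estimate $|II(x)|$. First I would apply Cauchy--Schwarz in the frequency variable to obtain
\[
|II(x)| \;\le\; |S|^{1/2}\left(\sum_{m \in S} |\widehat{E}(m)|^{2}\right)^{1/2}.
\]
Then I would invoke the Salem restriction estimate \eqref{l1bound} with $\Sigma = S$ and $f = E$. Since $\sum_{x} |E(x)| = |E|$, multiplying the resulting bound by $|S|^{1/2}$ gives
\[
\left(\sum_{m \in S} |\widehat{E}(m)|^{2}\right)^{1/2} \;\le\; \frac{N^{-d}\,|S|^{1/4}\,\Lambda_{\text{Salem}}^{1/2}\,|E|}{\sqrt{1 - \text{dens}(S)}}.
\]
Combining the two displays yields
\[
|II(x)| \;\le\; \frac{|E|\,|S|^{3/4}\,\Lambda_{\text{Salem}}^{1/2}}{N^{d}\sqrt{1 - \text{dens}(S)}},
\]
and a direct rearrangement shows that the hypothesis $|E|\,|S|^{3/4} < \tfrac12\, N^{d}\sqrt{(1 - \text{dens}(S))/\Lambda_{\text{Salem}}}$ forces this upper bound to be strictly less than $\tfrac12$, which finishes the argument.

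I do not anticipate any real obstacle here; the key steps are the pointwise Cauchy--Schwarz bound and the appeal to \eqref{l1bound}. The only bookkeeping point worth flagging is that \eqref{l1bound}, although stated as a by-product in the proof of Theorem~\ref{theoremsalem}, should be checked to apply to the indicator function $f = E$. Its derivation relies only on the Salem property of $S$ together with Cauchy--Schwarz and Plancherel, with no sign or mean-zero hypothesis on $f$, so the substitution is legitimate with unchanged constants.
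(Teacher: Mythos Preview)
Your proposal is correct and follows essentially the same approach as the paper's proof: split $E(x)=r(x)+II(x)$ via Fourier inversion, bound $|II(x)|$ by Cauchy--Schwarz, and then apply the $L^1\to L^2$ Salem restriction estimate \eqref{l1bound} from the proof of Theorem~\ref{theoremsalem} with $f=E$ to force $|II(x)|<\tfrac12$. Your write-up is in fact slightly more explicit than the paper's, since you spell out why $|II(x)|<\tfrac12$ guarantees that the DRA output matches $E$ pointwise.
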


\section{Signal recovery in $\Bbb R^d$} \label{grsection} The signal recovery problem can be set up in a very general setting, such as manifolds, hyperbolic domains, fractals, and Lie groups. We shall address this issue in the sequel, but in the meantime, we are going to provide a simple illustration of how the concepts of this paper play out in the context of the celebrated restriction conjecture in ${\mathbb R}^d$, $d \ge 2$. 


In Euclidean spaces, we may consider the following version of the exact recovery problem. Let $A$ be a subset of the unit cube, say, of positive Lebesgue measure, and let $1_A(x)$ denote its indicator function. By  the inverse Fourier transform,  
$$ 1_A(x)=\int e^{2 \pi i x \cdot \xi} \ \widehat{1}_A(\xi) d\xi,  \quad \forall \ x\in \Bbb R^d.$$
  
Suppose that the values of $\widehat{1}_A(\xi)$ for  $\xi\in S^{\delta}$ are missing, where $S^{\delta}$ is the $\delta$-neighborhood of $S  \subset {\mathbb R}^d$. 

As before, we have 
$$ 1_A(x)=\int_{\xi \notin S^{\delta}} e^{2 \pi i x \cdot \xi} \ \widehat{1}_A(\xi) d\xi+\int_{S^{\delta}} e^{2 \pi i x \cdot \xi} \ \widehat{1}_A(\xi) d\xi=I+II,$$ where  for some $r\in [1,\infty)$
 
  \begin{align}\label{ineq:Y}
  | II | \leq |S^{\delta}| \cdot {\left( \frac{1}{|S^{\delta}|} \int_{S^{\delta}} {|\widehat{1}_A(\xi)|}^r d\xi \right)}^{\frac{1}{r}}. 
  \end{align}

\begin{definition} (Restriction in ${\mathbb R}^d$) Given a set $S \subset {\mathbb R}^d$, and a measure $\sigma_S$ supported on $S$, we say that a $(p,r)$ restriction theorem holds for $S$ if  for any function $f$ 
$$ {\left( \int_S {|\widehat{f}(\xi)|}^r d\sigma_S(\xi) \right)}^{\frac{1}{r}} \leq C_{p,r} {\left( \int_{\Bbb R^d} {|f(x)|}^p dx \right)}^{\frac{1}{p}}.$$
\end{definition} 

Suppose that $S$ is compact. For any $\delta>0$, let $S^\delta$ denote a $\delta$ neighborhood of $S$. Define 
\begin{equation} \label{thickeningmeasure} \sigma_S = \lim_{\delta\to 0^+} \frac{1}{|S^\delta|} 1_{S^\delta}.\end{equation} For example, if $S$ is the unit sphere, (\ref{thickeningmeasure}) is a natural way to define the classical surface measure. 

Moving right along, if a $(p,r)$- restriction theorem is valid for $S^{\delta}$, with constants independent of $\delta$ (if $\delta$ is sufficiently small), the expression on the right of \eqref{ineq:Y} above is bounded by  

$$ C_{p,r}  |S^{\delta}| \cdot {|A|}^{\frac{1}{p}}.$$ 

Suppose, for example, that $S$ has upper Minkowiski dimension $\alpha$. Then we conclude that 
\begin{equation} \label{euclideanpushpull} | II | \leq C_{p,r} \cdot \delta^{d-\alpha} \cdot {|A|}^{\frac{1}{p}}. \end{equation} 

The restriction theorem always holds with $p=1$, so we always have 

\begin{equation} \label{euclideantrivial} | II | \lesssim \delta^{d-\alpha} |A|, \end{equation} and exact recovery is possible if $\delta^{d-\alpha} |A|$ is smaller than a sufficiently small constant. If $S$ is a compact piece of a hyperplane, for example, then it is not difficult to see that we can never obtain a $(p,r)$ restriction estimate with $p>1$. However, we can say much more in some specific cases, like the cases of a sphere or a paraboloid due to their curvature properties. See, for example, the discussion of restriction theory in \cite{St93}. See also \cite{Mockenhaupt2000} for the discussion of restriction for sets of fractional dimension. 

\vskip.125in 

\begin{conjecture} \label{restrictionconjecture} (Restriction conjecture) The restriction conjecture says that if $S$ is the unit sphere, (see e.g. \cite{St93};  for a thorough description of the problem, and \cite{Wang2022} for some recent developments) then
\begin{equation} \label{continuousrestriction} {\left( \int_S {|\widehat{f}(\xi)|}^r d\sigma_S(\xi) \right)}^{\frac{1}{r}} \leq C_{p,r} {\left( \int_{{\Bbb R}^d} {|f(x)|}^p dx \right)}^{\frac{1}{p}} \end{equation} whenever 
$$ p<\frac{2d}{d+1}, \ r \leq \frac{d-1}{d+1}p',$$ where $p'$ is the conjugate exponent to $p$. 
\end{conjecture}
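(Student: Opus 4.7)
The stated inequality is the Euclidean restriction conjecture for the sphere, one of the great open problems of harmonic analysis, so my proposal outlines the programme I would follow rather than claiming to close it. By Riesz-Thorin duality, \eqref{continuousrestriction} is equivalent to the extension estimate
\[
\|\widehat{g\,d\sigma}\|_{L^{p'}(\mathbb{R}^{d})} \leq C_{p,r}\,\|g\|_{L^{r'}(S^{d-1},d\sigma)},
\]
and the natural starting point is the classical Tomas-Stein theorem: combining the stationary-phase decay $|\widehat{d\sigma}(x)|\lesssim (1+|x|)^{-(d-1)/2}$ with a $TT^{*}$ argument and analytic interpolation between the trivial $(1,\infty)$ estimate and the $L^{2}$ endpoint, one produces the $r=2$ corner of the conjectured region, namely restriction at $\bigl(\tfrac{2(d+1)}{d+3},2\bigr)$. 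When $d=2$ this in fact yields the full conjecture (Fefferman, following Zygmund), because the Tomas-Stein endpoint then coincides with the conjectured endpoint, and one can furthermore decompose the circle into $\delta^{1/2}$-caps whose contributions are essentially $L^{2}$-orthogonal.

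To push $p$ strictly below the Tomas-Stein exponent in dimension $d\geq 3$, I would pass to a bilinear extension operator acting on caps $S_{1},S_{2}\subset S^{d-1}$ separated by an angle of order one. Transversality forces the intersections of the two families of $\delta^{-1/2}$-tubes dual to these caps to be highly constrained, and a wave-packet decomposition combined with Plancherel yields bilinear $L^{2}\times L^{2}\to L^{q}$ bounds strictly beyond what the linear theory gives. Whitney-decomposing the diagonal and converting bilinear back to linear, in the style of Tao, Vargas, and Vega, then extends the linear estimate below the Tomas-Stein line.

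To approach the full conjectured range, I would then bring in Guth's polynomial partitioning: choose a polynomial $P$ of controlled degree whose zero set partitions $\mathbb{R}^{d}$ into cells of balanced $L^{p'}$-mass for the extension operator, induct on scale inside each cell, and treat the contribution near $\{P=0\}$ either by a lower-dimensional induction on the algebraic variety or by the Bourgain-Guth broad/narrow dichotomy, where ``broad'' points are handled by multilinear Kakeya-type estimates \emph{à la} Bennett-Carbery-Tao and ``narrow'' points by induction on scales. Iterating this scheme produces the best bounds currently available for the sphere in $\mathbb{R}^{d}$.

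The principal obstacle is that no existing method closes the gap between these best bounds and the full conjectured range $p<\tfrac{2d}{d+1}$, $r\leq\tfrac{d-1}{d+1}p'$ when $d\geq 3$; the underlying difficulty is that Kakeya-type behaviour of wave packets interacts with the sphere's curvature in ways that $L^{2}$ arguments cannot fully capture, and the clean orthogonality between caps that drives the $d=2$ case genuinely fails in higher dimensions (indeed, the full restriction conjecture implies the Kakeya conjecture, which is itself open). Honestly, then, my plan terminates at the current state of the art, with the final gap flagged as the central open difficulty; reaching the endpoint appears to require a genuinely new idea beyond polynomial partitioning.
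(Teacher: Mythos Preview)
The statement you were asked to address is labeled \texttt{Conjecture} in the paper, not a theorem; the authors do not prove it and indeed could not, since the Euclidean restriction conjecture for the sphere is open in dimensions $d\ge 3$. The paper merely states the conjecture and then \emph{assumes} it as a hypothesis in Theorem~\ref{continuoussignalrecoverytheorem}. There is therefore no ``paper's own proof'' to compare your proposal against.

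Given that, your response is the right one: you correctly identify the statement as a major open problem, accurately summarize the known technology (Tomas--Stein via $TT^{*}$ and stationary phase, the $d=2$ resolution, bilinear extension and Whitney decomposition in the style of Tao--Vargas--Vega, Guth's polynomial partitioning, the broad/narrow dichotomy and multilinear Kakeya), and honestly flag that the programme stalls short of the full conjectured range for $d\ge 3$. Your remark that restriction implies Kakeya, and that this is part of why the endpoint resists $L^{2}$-based methods, is also correct. Nothing in your outline is wrong; it is simply a survey of partial progress toward an open problem, which is all that can reasonably be offered here.
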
 

\vskip.125in 

\begin{remark} In every known result pertaining to the restriction conjecture, the resulting estimate is still valid if $\sigma_S$ is replaced by $\frac{1}{|S^\delta|} 1_{S^\delta}$, with constants independent of $\delta$, if $\delta$ is sufficiently small. 
\end{remark} 

\vskip.125in 

\begin{theorem} \label{continuoussignalrecoverytheorem} Suppose that the restriction conjecture (\ref{continuousrestriction}) holds. Suppose that the same estimate holds if $\sigma_S$ is replaced by $\frac{1}{|S^\delta|} 1_{S^\delta}$ with $\delta$ sufficiently small. Let $A$ be a measurable subset of ${\Bbb R}^d$ and the Fourier transform of $\widehat{1}_A(\xi)$ is known, except for the $\delta$-neighborhood of the unit sphere. Then there exists $C<\infty$, independent of $\delta$, such that exact recovery of $A$ is possible, up to a set of measure $0$, if 
$$ |A| \leq C \delta^{-p} \ \text{for any} \ p<\frac{2d}{d+1}.$$ 
\end{theorem}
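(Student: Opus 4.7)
The plan is to extend the Direct Rounding Algorithm of Subsection \ref{simplerecoverysubsection} to the Euclidean setting, leveraging the decomposition $1_A(x) = I(x) + II(x)$ already established in the excerpt. The known part $I(x)$ is computable from the observed Fourier data outside the neighborhood $S^{\delta}$ of the unit sphere, while the unknown part $II(x)$ must be controlled to be strictly less than $\tfrac{1}{2}$ (almost everywhere in $x$) so that rounding $I(x)$ to the nearest element of $\{0,1\}$ returns $1_A(x)$ a.e.

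To realize this, I would first fix an exponent $p < \tfrac{2d}{d+1}$ together with a compatible $r \leq \tfrac{d-1}{d+1}p'$, so that the hypothesized restriction estimate
\begin{equation*} \left( \frac{1}{|S^{\delta}|} \int_{S^{\delta}} |\widehat{1}_A(\xi)|^r \, d\xi \right)^{1/r} \leq C_{p,r} \, |A|^{1/p} \end{equation*}
is available with a constant $C_{p,r}$ uniform in $\delta$ for $\delta$ small. Inserting this into the H\"older bound \eqref{ineq:Y} yields the key estimate
\begin{equation*} |II(x)| \leq C_{p,r} \cdot |S^{\delta}| \cdot |A|^{1/p} \qquad \text{for every } x \in {\Bbb R}^d. \end{equation*}
Since $S$ is the unit sphere in ${\Bbb R}^d$, its $\delta$-neighborhood is the annular shell between radii $1-\delta$ and $1+\delta$, and a direct volume calculation gives $|S^{\delta}| \leq c_d \, \delta$ for all small $\delta$. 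Consequently $|II(x)| \leq C_{p,r,d} \cdot \delta \cdot |A|^{1/p}$, and taking the absolute constant $C = (2C_{p,r,d})^{-p}$, the hypothesis $|A| \leq C \delta^{-p}$ forces $|II(x)| < \tfrac{1}{2}$ uniformly in $x$.

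To complete the recovery I would verify that $I(x)$ is real-valued a.e.: since $1_A$ is real we have $\widehat{1}_A(-\xi) = \overline{\widehat{1}_A(\xi)}$, and since $S^{\delta}$ is symmetric under $\xi \mapsto -\xi$, so is its complement, hence the inverse Fourier integral defining $I(x)$ is real. Define $\widetilde{A}(x) = 1$ if $I(x) \geq \tfrac{1}{2}$ and $\widetilde{A}(x) = 0$ otherwise. The bound $|II(x)| < \tfrac{1}{2}$ together with the identity $1_A(x) = I(x) + II(x)$ (valid a.e., since Fourier inversion for $1_A \in L^1 \cap L^2$ holds only almost everywhere) forces $\widetilde{A}(x) = 1_A(x)$ a.e., which gives recovery of $A$ up to a null set.

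The main obstacle, besides the deep input of the restriction conjecture itself (which is assumed), is the propagation of the restriction constant from the singular measure $\sigma_S$ to the thickened measure $\tfrac{1}{|S^{\delta}|} 1_{S^{\delta}}$ uniformly in $\delta$ -- a property the theorem explicitly hypothesizes and which is known to hold in every established case of the conjecture. Everything else -- the shell volume estimate $|S^{\delta}| \lesssim \delta$, the symmetry argument ensuring $I$ is real, and the thresholding step itself -- is bookkeeping.
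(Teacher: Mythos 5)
Your proposal is correct and follows essentially the same route as the paper: the paper derives $|II| \leq C_{p,r}\,\delta^{d-\alpha}\,|A|^{1/p}$ from the thickened restriction estimate and then states that the theorem follows by taking $\alpha = d-1$, which is precisely your annulus-volume computation combined with thresholding at $\tfrac{1}{2}$. The details you supply --- the explicit choice $C = (2C_{p,r,d})^{-p}$, the realness of $I(x)$, and the a.e.\ rounding step --- are exactly the bookkeeping the paper leaves implicit.
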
 

\vskip.125in 

The proof of Theorem \ref{continuoussignalrecoverytheorem} follows by taking $\alpha=d-1$ and $p$ from the restriction conjecture in (\ref{continuousrestriction}) above. 



\section{Proof of Theorems}
\label{proofsection}

\begin{proof}[Proof of Theorem \ref{mainUPwithRT}]
Suppose that $f$ is supported in a set $E$, and $\widehat{f}$ is supported in a set $\Sigma$. Then by the Fourier Inversion Formula and the support condition, 
$$ f(y)=\sum_{m \in {\mathbb Z}_N^d} \chi(y \cdot m) \widehat{f}(m)=\sum_{m \in \Sigma} \chi(y \cdot m) \widehat{f}(m)$$

By H\"older's inequality, 
$$ |f(y)| \leq |\Sigma| \cdot {\left( \frac{1}{|\Sigma|} \sum_{m \in \Sigma} {|\widehat{f}(m)|}^q \right)}^{\frac{1}{q}}.$$ 

By  restriction bound assumption  (\ref{restrictionequation}), this expression is bounded by 
$$ |\Sigma| \cdot C_{p,q} \cdot N^{-d} \cdot {\left( \sum_{x \in {\mathbb Z}_N^d} {|f(x)|}^p \right)}^{\frac{1}{p}},$$ and by the support assumption, this quantity is equal to  
$$ |\Sigma| \cdot C_{p,q} \cdot N^{-d} \cdot {\left( \sum_{x \in E} {|f(x)|}^p \right)}^{\frac{1}{p}}.$$

Putting everything together, we see that 
$$ |f(y)| \leq |\Sigma| \cdot C_{p,q} \cdot N^{-d} \cdot {\left( \sum_{x \in E} {|f(x)|}^p \right)}^{\frac{1}{p}}\quad \forall \ y\in E.$$

Raising both sides to the power of $p$, summing over $E$, and dividing both sides of the resulting inequality by $\sum_{x \in E} {|f(x)|}^p$, we obtain 
$$ {|\Sigma|}^p \cdot |E| \cdot C_{p,q}^p \ge N^{dp},$$ or, equivalently, 
$$ {|E|}^{\frac{1}{p}} \cdot |\Sigma| \ge \frac{N^d}{C_{p,q}}, $$ as desired.

 \end{proof}

 \begin{proof}[Proof of Theorem \ref{restrictionlambda4}]
We have 
\begin{align} \label{initiallambda} \sum_{m \in \Sigma} {|\widehat{f}(m)|}^2 &=\sum_{m\in \Bbb Z_N^d} {|\widehat{f}(m)|}^2 \Sigma(m)  \\ 
  \label{gfunction}
  &=\sum_{m\in \Bbb Z_N^d}  \widehat{f}(m)\Sigma(m)g(m), \end{align} where 
$$ g(m)=\overline{\widehat{f}(m)}\Sigma(m).$$ 

By definition of the Fourier transform, the right-hand side of (\ref{gfunction}) is equal to 
 \begin{align}\notag
     &N^{-d} \sum_m \sum_x \chi(-x \cdot m) f(x) \Sigma(m)g(m)\\
  \label{beforeholder}
  &=\sum_x f(x) \widehat{g\Sigma}(x).  
\end{align}
By H\"older's inequality, the quantity in  (\ref{beforeholder})   is bounded by 
\begin{equation} \label{afterholder}  {\left( \sum_{x \in {\mathbb Z}_N^d} {|f(x)|}^{\frac{4}{3}} \right)}^{\frac{3}{4}} \cdot {\left( \sum_{x \in {\mathbb Z}_N^d} {|\widehat{g\Sigma}(x)|}^4 \right)}^{\frac{1}{4}}. \end{equation} 

Continuing, we have 
\begin{align}\notag
&\sum_{x \in {\mathbb Z}_N^d} {|\widehat{g \Sigma}(x)|}^4 =  \\\notag
&=N^{-4d} \sum_x \sum_{m_1,m_2,m_3,m_4 \in \Sigma} \chi(x \cdot (m_1+m_2-m_3-m_4)) g(m_1)g(m_2)g(m_3)g(m_4)\\\notag
&=N^{-3d} \sum_{m_1+m_2=m_3+m_4;  \ m_j \in \Sigma} g(m_1)g(m_2)g(m_3)g(m_4).
\end{align}

The modulus of this expression is bounded by 
$$ \Lambda_{\text{energy}} \cdot N^{-3d} \cdot {\left( \sum_m {|g(m)|}^2 \right)}^2.$$  
To see this, we use a similar idea in  \cite{KP22}, page 11:  we take $g$ to be a linear combination of indicator functions of sets, then apply the Cauchy-Schwartz and the assumption (\ref{lambda4condition}). 

\vskip.125in 

Going back, we see that the expression is bounded by 
$$  {\left( \sum_{x \in {\mathbb Z}_N^d} {|f(x)|}^{\frac{4}{3}} \right)}^{\frac{3}{4}}  \cdot \Lambda_{\text{energy}}^{\frac{1}{4}} \cdot N^{-\frac{3d}{4}} \cdot  {\left( \sum_m {|g(m)|}^2 \right)}^{\frac{1}{2}}.$$ 

If we go back to (\ref{initiallambda}), and unravel the definitions, we see that 
\begin{align}\notag
 \sum_m {|g(m)|}^2 \leq {\left( \sum_{x \in {\mathbb Z}_N^d} {|f(x)|}^{\frac{4}{3}} \right)}^{\frac{3}{4}}  \cdot \Lambda_{\text{energy}}^{\frac{1}{4}} \cdot N^{-\frac{3d}{4}} \cdot  
{\left( \sum_m {|g(m)|}^2 \right)}^{\frac{1}{2}}, 
\end{align} 
hence 
\begin{align}\notag {\left( \frac{1}{|\Sigma|} \sum_{m \in \Sigma} {|\widehat{f}(m)|}^2 \right)}^{\frac{1}{2}} &\leq {\left( \sum_{x \in {\mathbb Z}_N^d} {|f(x)|}^{\frac{4}{3}} \right)}^{\frac{3}{4}}  \cdot 
\frac{1}{{|\Sigma|}^{\frac{1}{2}}} \cdot \Lambda_{\text{energy}}^{\frac{1}{4}} \cdot N^{-\frac{3d}{4}}\\\notag
& =\Lambda_{\text{energy}}^{\frac{1}{4}} \cdot N^{-d} \cdot {\left( \sum_{x \in {\mathbb Z}_N^d} {|f(x)|}^{\frac{4}{3}} \right)}^{\frac{3}{4}} \cdot \frac{N^{\frac{d}{4}}}{{|\Sigma|}^{\frac{1}{2}}}\\\notag
&=\Lambda_{\text{size}}^{-\frac{1}{2}} \cdot \Lambda_{\text{energy}}^{\frac{1}{4}} \cdot N^{-d} \cdot {\left( \sum_{x \in {\mathbb Z}_N^d} {|f(x)|}^{\frac{4}{3}} \right)}^{\frac{3}{4}}, 
\end{align}
as claimed. 
\end{proof}


\begin{proof}[Proof of Theorem \ref{theoremsalem}] 
By Fourier Inversion, 
$$ f(x)=\sum_{m \in S} \chi(x \cdot m) \widehat{f}(m).$$

It follows that 
$$ |f(x)| \leq |S| \cdot {\left( \frac{1}{|S|} \sum_{m \in S} {|\widehat{f}(m)|}^2 \right)}^{\frac{1}{2}}.$$

We have 
$$ \sum_{m \in S} {|\widehat{f}(m)|}^2=\sum_{m \in S} {|\widehat{f}(m)|}^2 S_0(m)+\frac{|S|}{N^d} \sum_{m \in S} {|\widehat{f}(m)|}^2,$$ where $S_0(m)=S(m)-\frac{|S|}{N^d}$. 

\vskip.125in 

It follows that 

 \begin{align}
     (1-dens(S)) \sum_{m \in S} {|\widehat{f}(m)|}^2 &=\sum_m {|\widehat{f}(m)|}^2 S_0(m)\\\notag
& =N^{-d} \sum_{x,y} \bar{f}(x) f(y) \widehat{S}_0(x-y) \leq N^{-d} \cdot \Lambda_{\text{Salem}} \cdot \frac{{|S|}^{\frac{1}{2}}}{N^d} \cdot {\left( \sum_x |f(x)| \right)}^2. 
\end{align}

We deduce that 
$$ {\left( \frac{1}{|S|} \sum_{m \in S} {|\widehat{f}(m)|}^2 \right)}^{\frac{1}{2}} \leq 
\frac{N^{-d} \cdot {|S|}^{-\frac{1}{4}} \cdot \Lambda^{\frac{1}{2}}_{\text{Salem}} \cdot 
\sum_x |f(x)|}{\sqrt{1-dens(S)}}.$$

\vskip.125in 

Putting everything together, we see that $$ |f(x)| \leq \frac{1}{N^d} \cdot {|S|}^{\frac{3}{4}} \cdot \Lambda^{\frac{1}{2}}_{\text{Salem}} \cdot \sum_x |f(x)| \cdot \frac{1}{\sqrt{1-dens(S)}}.$$

\vskip.125in 

Summing both sides over $x \in E$, using the assumption that $f$ is supported in $E$, and dividing both sides by $\sum_{x \in E} |f(x)|$, we obtain the conclusion of the theorem. This completes the proof. 
\end{proof}


\vskip.125in 
\begin{proof}[Proof of Theorem \ref{salemdratheorem}]
We have 
 \begin{align}
     E(x)&=\sum_m \chi(x \cdot m) \widehat{E}(m) \\\notag
  &=\sum_{m \notin S} \chi(x \cdot m) \widehat{E}(m)+ \sum_{m \in S} \chi(x \cdot m) \widehat{E}(m)\\ \notag
 &=I(x)+II(x). 
\end{align}

By Cauchy-Schwarz, 
$$ |II(x)| \leq |S| \cdot {\left( \frac{1}{|S|} \sum_{m \in S} {|\widehat{E}(m)|}^2 \right)}^{\frac{1}{2}}.$$

By the proof of Theorem \ref{theoremsalem} above, 
$$ |S| \cdot {\left( \frac{1}{|S|} \sum_{m \in S} {|\widehat{E}(m)|}^2 \right)}^{\frac{1}{2}}$$
$$ \leq \frac{1}{N^d} \cdot {|S|}^{\frac{3}{4}} \cdot \Lambda^{\frac{1}{2}}_{\text{Salem}} 
\cdot |E| \cdot \frac{1}{\sqrt{1-dens(S)}}.$$

We need this quantity to be $<\frac{1}{2}$ and the desired conclusion follows using the reasoning laid out in Subsection \ref{simplerecoverysubsection}.

\end{proof}



\end{document}